\newtheorem{thm}{Theorem}[section]
\newtheorem{df}[thm]{Definition}
\newtheorem{lem}[thm]{Lemma}
\newtheorem{prob}{Problem}
\newenvironment {proof} {\noindent{\em Proof.}}{\hspace*{\fill}$\Box$\par\vspace{4mm}}
\title{\textbf{ Conflict-free connectivity: algorithm and complexity\footnote{Supported by NSFC No.11871034, 11531011 and NSFQH No.2017-ZJ-790.}}}
\author{{\small Meng Ji, Xueliang Li, Xiaoyu Zhu } \\
{\small  Center for Combinatorics and LPMC}\\
{\small Nankai University, Tianjin 300071, P.R. China}\\
{\small Email: jimengecho@163.com; lxl@nankai.edu.cn; zhuxy@mail.nankai.edu.cn}\\
}
\date{}
\begin{document}
\maketitle
\begin{abstract}

A path in an(a) edge(vertex)-colored graph is called \emph{a conflict-free path} if there exists a color used on only one of its edges(vertices). An(A) edge(vertex)-colored graph is called \emph{conflict-free (vertex-)connected} if there is a conflict-free path between each pair of distinct vertices. We call the graph $G$ \emph{strongly conflict-free connected }if there exists a conflict-free path of length $d_G(u,v)$ for every two vertices $u,v\in V(G)$. And the \emph{strong conflict-free connection number} of a connected graph $G$, denoted by $scfc(G)$, is defined as the smallest number of colors that are required to make $G$ strongly conflict-free connected. In this paper, we first investigate the question: Given a connected graph $G$ and a coloring $c: E(or\ V)\rightarrow \{1,2,\cdots,k\} \ (k\geq 1)$ of the graph, determine whether or not $G$ is, respectively, conflict-free connected, vertex-conflict-free connected, strongly conflict-free connected under coloring $c$. We solve this question by providing polynomial-time algorithms. We then show that it is NP-complete to decide whether there is a k-edge-coloring $(k\geq 2)$ of $G$ such that all pairs $(u,v)\in P \ (P\subset V\times V)$ are strongly conflict-free connected. Finally, we prove that the problem of deciding whether $scfc(G)\leq k$ $(k\geq 2)$ for a given graph $G$ is NP-complete.   \\[2mm]
\textbf{Keywords:} conflict-free connection; polynomial-time algorithm; strong conflict-free connection; complexity
\\
\textbf{AMS subject classification 2010:} 05C15, 05C40, 68Q17, 68Q25, 68R10.\\
\end{abstract}

\section{Introduction}

All graphs mentioned in this paper are simple, undirected and finite. We follow book \cite{BM} for
undefined notation and terminology. Coloring problems are important parts of graph theory. In recent years, there have appeared a number of colorings raising great
concern due to their wide applications in real world. We list a few well-known colorings here. The first of such would be the rainbow connection coloring, which is stated as follows. A path in an edge-colored graph is called a {\it rainbow path} if all its edges have distinct colors.
An edge-colored connected graph is called {\it rainbow connected} if there is a rainbow path between every pair of distinct vertices
in this graph. For a connected graph $G$, the smallest number of colors needed to make $G$ rainbow connected is called the
{\it rainbow connection number} of $G$, denoted by $rc(G)$. This concept was first introduced by Chartrand et al. in \cite{CJMZ}. Chakraborty et al. have proved in \cite{CFMY} that deciding whether $rc(G)=2$ is NP-complete.

Inspired by the rainbow connection coloring, the concept of proper connection coloring was independently posed by Andrews et al. in \cite{ALLZ} and Borozan et al. in \cite{BFGMMMT}, its only difference from rainbow connection coloring is that distinct colors are only required for adjacent edges instead of all edges on the path. For an edge-colored connected graph $G$, the smallest number of colors required to give $G$ a proper connection coloring is called the \emph{proper connection number} of $G$, denoted by $pc(G)$. Almost in the same time, Caro and Yuster \cite{CY} introduced the concept of \emph{monochromatic connection coloring}. A path in an edge-colored graph $G$ is a \emph{monochromatic path} if all the edges of the path are colored the same. The graph $G$ is called \emph{monochromatically connected} if any two distinct vertices of G are connected by a monochromatic path. The \emph{monochromatic connection number} of $G$, denoted by $mc(G)$, is the {\bf maximum} number of colors such that $G$ is monochromatically connected. A lot of results have been obtained since these concepts were introduced.

In this paper, we focus on the conflict-free (vertex-)coloring. The hypergraph version of this concept was first introduced
by Even et al. in \cite{ELRS}. A hypergraph $H$ is a pair $H=(X,E)$ where $X$ is the set of vertices, and $E$ is the set of non-empty
subsets of $X$ called edges. The coloring was motivated to solve the problem of assigning frequencies to different base stations
in cellular networks. There are a number of base stations and clients in the network. Each base station is a vertex in the hypergraph
which needs to be allocated to a frequency. Different frequencies stand for different colors in a vertex-colored hypergraph.
Every client is moveable, so it can be in the range of lots of base stations. Thus each client is a set of many vertices, i.e,
clients represent edges. For each client, in order to make connection with one of the base station in the range, there must be
at least one base station with a unique frequency in the range for fear of interference. Unnecessarily many different frequencies
can be expensive, so this situation may be converted to a conflict-free vertex-coloring problem of a hypergraph seeking for
the minimum number of colors which is defined as the {\it conflict-free chromatic number} of the hypergraph.

Later on, Czap et al. \cite{CJV} introduced the concept of \emph{conflict-free connection} of graphs on the basis of the earlier hypergraph version. A path in an edge-colored graph $G$ is called a \emph{conflict-free path} if there is a color appearing only once on the path. The graph $G$ is called \emph{conflict-free connected} if there is a conflict-free path between each pair of distinct vertices of $G$. The minimum number of colors required to make $G$ conflict-free connected is called the \emph{conflict-free connection number} of $G$, denoted by $cfc(G)$.

As a natural counterpart of the conflict-free connection, Li et al. \cite{LZZMZJ} introduced the concept of \emph{conflict-free vertex-connection} of graphs. A path in a vertex-colored graph is called a \emph{conflict-free path} if it has at least one vertex with a unique color on the path. A vertex-colored graph is called \emph{conflict-free vertex-connected} if there is a conflict-free path between every pair of distinct vertices of $G$. The minimum number of colors required to make $G$ conflict-free vertex-connected is called the \emph{conflict-free vertex-connection number} of $G$, denoted by $vcfc(G)$.

There have been many results on the conflict-free (vertex-)connection coloring due to its theoretical and practical significance.
\begin{thm}{\upshape\cite{CJV, DLLMZ}\label{noncomplete}}
Let $G$ be a noncomplete 2-edge-connected graph. Then $cfc(G)=2$.
\end{thm}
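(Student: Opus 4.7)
The lower bound is immediate: under a single-color edge-coloring every conflict-free path must have length exactly one (the lone color must appear just once), so conflict-free connectivity would force every pair of vertices to be adjacent. Since $G$ is noncomplete, $cfc(G)\ge 2$.

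For the upper bound, my plan is to construct an explicit 2-edge-coloring via an ear decomposition of $G$. Since $G$ is 2-edge-connected, it admits a decomposition $G=C_0\cup P_1\cup\cdots\cup P_k$, where $C_0$ is a cycle and each $P_i$ is an ear (open or closed) attached to $G_{i-1}:=C_0\cup P_1\cup\cdots\cup P_{i-1}$. The coloring rule: assign color $2$ to exactly one edge of $C_0$, and for each ear $P_i$ of length at least $2$ to exactly one edge (chosen incident to one endpoint of $P_i$); assign color $1$ to every other edge, in particular to every chord ear.

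The verification is by induction on $i$, maintaining two invariants for $G_i$: (a) the color-$1$ subgraph of $G_i$ is connected and spans $V(G_i)$; and (b) every two vertices of $G_i$ are joined by a conflict-free path on which color $2$ appears exactly once. Both are easy for the base case $G_0=C_0$: the color-$1$ edges of $C_0$ form a spanning path, and between any two vertices the arc of $C_0$ containing the color-$2$ edge traverses it precisely once. For the inductive step with $P_i=x_0 x_1\cdots x_\ell$ and color-$2$ edge $x_0 x_1$, the color-$1$ edges of $P_i$ form the path $x_1 x_2\cdots x_\ell$, attaching to the connected color-$1$ subgraph of $G_{i-1}$ at $x_\ell$, which preserves invariant (a). For (b) with a newly introduced interior vertex $z=x_j$ and a target $w\in V(G_{i-1})$, one takes the walk obtained by going back along color-$1$ edges of $P_i$ from $z$ to $x_1$, crossing the color-$2$ edge $x_0 x_1$, and then continuing along a color-$1$ path in $G_{i-1}$ to $w$; this is a simple path (since the interior vertices of $P_i$ are new) that uses color $2$ exactly once.

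The main obstacle is the case analysis of the inductive step. Beyond the case above, one also needs to handle: (i) pairs of interior vertices of the same ear $P_i$, routed by first going back from one interior vertex through the color-$2$ edge $x_0 x_1$ to $x_0$, taking a color-$1$ detour in $G_{i-1}$ from $x_0$ to $x_\ell$, and returning backward along $P_i$'s color-$1$ edges to the other interior vertex; (ii) closed ears with $x_0=x_\ell$, where the same argument applies since $P_i$ forms a short cycle attached at a single vertex; and (iii) chord ears (length-$1$ ears), which introduce no new vertex and require only invariant (a). With these checks done, both invariants propagate to $G_k=G$, giving $cfc(G)\le 2$ and hence $cfc(G)=2$.
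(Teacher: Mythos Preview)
The paper does not supply its own proof of this theorem; it is quoted as a known result from \cite{CJV, DLLMZ}. Your ear-decomposition argument is correct and is in fact the method used in those original sources: color one edge of the starting cycle and one endpoint-incident edge of each nontrivial ear with color~$2$, everything else with color~$1$, and maintain inductively that the color-$1$ subgraph is spanning and connected while every pair of vertices is joined by a path using color~$2$ exactly once. Your treatment of the three subcases in the inductive step (new interior vertex to an old vertex, two interior vertices of the same ear via a color-$1$ detour through $G_{i-1}$, and closed or chord ears) is sound; in particular, placing the color-$2$ edge at an endpoint of each ear is exactly what keeps invariant~(a) intact for closed ears.
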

\begin{lem}{\upshape \cite{CJV}}\label{path}
If $P_n$ is a path on $n$ vertices, then $cfc(P_n)=\lceil
\log_2 n\rceil $.
\end{lem}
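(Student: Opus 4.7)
The plan is to prove the lemma by establishing matching upper and lower bounds. The key structural fact I will use throughout is that $P_n$ contains a unique path between any two of its vertices, so an edge-coloring of $P_n$ is conflict-free connected if and only if every contiguous subinterval of edges contains a color appearing on exactly one edge of that subinterval. Label the edges consecutively as $e_1, e_2, \ldots, e_{n-1}$.

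For the lower bound, I would induct on the number of colors $k$ to prove that any conflict-free coloring of $P_n$ using at most $k$ colors forces $n \leq 2^k$. The case $k = 1$ is immediate, since a monochromatic path is conflict-free connected only if it has at most one edge. For the inductive step, apply the conflict-free condition to the pair formed by the two endpoints of the whole $P_n$: this produces a color $c$ used on exactly one edge $e$ of the full path. Deleting $e$ splits $P_n$ into subpaths $P_a$ and $P_b$ with $a + b = n$, and each inherits a conflict-free coloring that uses at most $k - 1$ colors, since $c$ no longer appears on either side. By induction $a, b \leq 2^{k-1}$, so $n \leq 2^k$, which yields $k \geq \lceil \log_2 n \rceil$.

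For the upper bound, the plan is to exhibit an explicit coloring with $\lceil \log_2 n \rceil$ colors: set $c(e_i) = \nu_2(i) + 1$, where $\nu_2$ denotes the $2$-adic valuation. A short check on the maximum edge index shows that the palette fits in $\{1, 2, \ldots, \lceil \log_2 n \rceil\}$. To verify the conflict-free property on an arbitrary subpath indexed by $\{i, i+1, \ldots, j\}$, let $t = \max_{i \leq m \leq j} \nu_2(m)$; then the color $t+1$ is attained at exactly one index in the interval, because any two distinct indices of valuation $t$ would differ by a multiple of $2^{t+1}$, forcing a multiple of $2^{t+1}$ to lie strictly between them and contradicting the maximality of $t$.

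I expect the main subtlety to lie in this uniqueness argument for the $2$-adic valuation on an integer interval; it is a clean but essential number-theoretic fact without which the construction fails. The inductive lower bound, by contrast, is routine once one recognizes that the globally unique color produced by the endpoint condition is the natural edge to delete in the recursion, and that the two resulting subpaths automatically inherit conflict-free colorings thanks to the uniqueness of paths inside $P_n$.
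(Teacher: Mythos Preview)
The paper does not supply its own proof of this lemma; it is quoted verbatim from \cite{CJV} as background. So there is no in-paper argument to compare against, and your proposal stands as an independent proof.

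Your argument is correct in both directions. The lower bound induction is clean: the globally unique color on the full path is indeed the right edge to delete, and the resulting subpaths inherit conflict-free colorings (since every subpath of a subpath is a subpath of $P_n$) using at most $k-1$ colors. This yields $n\le 2^k$ exactly as you state. For the upper bound, the $2$-adic coloring $c(e_i)=\nu_2(i)+1$ is the standard construction; the palette-size check $\lfloor\log_2(n-1)\rfloor+1=\lceil\log_2 n\rceil$ holds for all $n\ge 2$, and your uniqueness argument is sound: if $m_1<m_2$ both have $\nu_2=t$, write $m_1=(2c+1)2^t$, $m_2=(2d+1)2^t$ with $c<d$, and then $(2c+2)2^t=(c+1)2^{t+1}$ lies strictly between them with valuation at least $t+1$, contradicting maximality. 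Nothing is missing.
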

\begin{thm}{\upshape \cite{CJV}}\label{diameter}
If $T$ is a tree on $n$ vertices with maximum degree $\Delta(T)\geq 3$ and diameter $diam(T)$, then
$$
\max\{\Delta(T),\log_2 diam(T)\}\leq cfc(T)\leq \frac{(\Delta(T)-2)\log_2 n}{\log_2\Delta(T)-1}.
$$
\end{thm}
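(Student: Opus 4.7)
The lower bound $\max\{\Delta(T),\log_2 diam(T)\}$ splits into two independent arguments, both of which exploit the fact that paths in a tree are unique. For $cfc(T)\geq\Delta(T)$, I would take a vertex $v$ of maximum degree with neighbors $u_1,\ldots,u_\Delta$; the unique $u_i$--$u_j$ path is the two-edge path $u_i v u_j$, which is conflict-free only when the two edges at $v$ carry distinct colors. Applying this to every pair of neighbors forces the $\Delta$ edges incident to $v$ to receive $\Delta$ pairwise distinct colors. For $cfc(T)\geq\log_2 diam(T)$, I would fix a diametral path $P$ on $diam(T)+1$ vertices. Since the unique $T$-path between any two vertices of $P$ lies entirely in $P$, the restriction of any valid coloring to $P$ already makes $P$ conflict-free connected; Lemma~\ref{path} then yields at least $\lceil\log_2(diam(T)+1)\rceil\geq\log_2 diam(T)$ colors.

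For the upper bound I would construct an edge-coloring by recursion on the tree. The target expression $\frac{(\Delta(T)-2)\log_2 n}{\log_2(\Delta(T)-1)}$ strongly suggests a centroid-style decomposition of depth $\log_{\Delta-1} n$, spending at most $\Delta-2$ fresh colors per level. Concretely, I would pick a balanced vertex $r$ such that each component of $T-r$ contains at most a $\tfrac{\Delta-2}{\Delta-1}$ fraction of the vertices, assign a block of $\Delta-2$ new colors to the edges incident to $r$ (reusing the remaining two colors from below on the other incident edges, so as not to waste any), and recurse on each component. Iterating $\log_{(\Delta-1)/(\Delta-2)} n$ times with $\Delta-2$ new colors per step would reach the claimed total.

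The main obstacle is two-fold: identifying a rooting that shrinks the tree by the required factor at every recursion step, and arranging the colorings across the different recursion levels so that every pair $u,v$ still receives a conflict-free path. For the latter, I expect the cleanest invariant to be that on the unique $u$--$v$ path the color introduced at the highest recursion level touched by this path appears on exactly one edge; verifying that this invariant survives the color reuse needed to drive the per-level cost down to $\Delta-2$ is the delicate part of the argument and is where I expect the bulk of the technical work to lie.
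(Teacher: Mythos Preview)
The paper does not actually prove this theorem: it is quoted from \cite{CJV} in the introductory survey of known results, with no argument supplied. So there is no ``paper's own proof'' to compare against. I will therefore just comment on the soundness of your plan.

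Your two lower-bound arguments are the standard ones and are correct as written: distinct colors on the $\Delta(T)$ edges at a maximum-degree vertex are forced by the length-two paths between its neighbors, and restricting any conflict-free coloring of $T$ to a diametral path invokes Lemma~\ref{path}.

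The upper-bound sketch, however, does not add up. You correctly observe that the target $\dfrac{(\Delta-2)\log_2 n}{\log_2(\Delta-1)}=(\Delta-2)\log_{\Delta-1}n$ asks for roughly $\log_{\Delta-1}n$ recursion levels at a cost of $\Delta-2$ new colors each. But the separator you propose---a vertex $r$ with every component of $T-r$ of size at most $\tfrac{\Delta-2}{\Delta-1}\,n$---gives a recursion depth of $\log_{(\Delta-1)/(\Delta-2)}n$, not $\log_{\Delta-1}n$, and multiplying by $\Delta-2$ then overshoots the stated bound for every $\Delta\ge 4$. (You even write both depth expressions in consecutive sentences, which signals the mismatch.) To hit $\log_{\Delta-1}n$ levels you would need each piece to shrink by a factor of $\Delta-1$, and no single-vertex separator guarantees that in an arbitrary tree of maximum degree~$\Delta$. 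Moreover, at the centroid $r$ you have up to $\Delta$ incident edges but only $\Delta-2$ fresh colors; your plan to ``reuse the remaining two colors from below'' is exactly the step that can break the conflict-free property for paths that cross $r$, and you have not said how to avoid this. In short, the decomposition and the per-level budget do not fit together, and a different construction (for instance, one that exploits the $H_{EP}(T)$ hypergraph viewpoint mentioned in the paper, or a rooted layering that controls heights in a $(\Delta-1)$-ary sense) is needed to reach the claimed bound.
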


\begin{thm}{\upshape\cite{LZZMZJ}}
Let $G$ be a connected graph of order at least $3$. Then $vcfc(G)=2$ if and only
if $G$ is $2$-connected or $G$ has only one cut-vertex.
\end{thm}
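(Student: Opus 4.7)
The plan is to prove both directions of the equivalence, starting from the trivial lower bound $vcfc(G) \geq 2$ for any connected graph of order at least $3$, since a single color leaves no path on two or more vertices with a uniquely colored vertex.

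For the sufficient direction, namely $G$ being $2$-connected or having exactly one cut-vertex implies $vcfc(G) \leq 2$, I would exhibit an explicit $2$-coloring. If $G$ is $2$-connected, pick any vertex $w$, assign color $2$ to $w$, and color $1$ to every other vertex. For a pair $u,v$ with neither equal to $w$, the standard consequence of $2$-connectivity that any three distinct vertices lie on a common path gives a $u$-$v$ path through $w$ on which $w$ is the unique color-$2$ vertex; when $w\in\{u,v\}$ the argument is immediate. If $G$ has exactly one cut-vertex $w$, then every block of $G$ contains $w$, and coloring $w$ with $2$ and the rest with $1$ works: pairs in distinct blocks necessarily traverse $w$, while pairs inside the same block $B$ (with neither equal to $w$) can be routed through $w$ using the $2$-connectedness of $B$, noting that $|V(B)|\geq 3$ in this sub-case.

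For the necessary direction, I would argue the contrapositive: if $G$ has at least two cut-vertices, then $vcfc(G) \geq 3$. The block-cut tree forces the existence of two end-blocks $B_1, B_2$ whose unique cut-vertices $c_1, c_2$ are distinct, since otherwise all end-blocks share a single cut-vertex and the block tree is a star. Set $S_i := V(B_i) \setminus \{c_i\}$; both are nonempty, every $u$-$v$ path with $u \in S_1, v \in S_2$ traverses both $c_1$ and $c_2$, and every $u$-$v$ path with $u, v \in S_i$ stays inside $B_i$. Given any $2$-coloring, I case-split on the color pattern. The main case is when both $S_1, S_2$ contain both colors: one then picks $u \in S_1$ and $v \in S_2$ so that the multiset $\{c(u), c(c_1), c(c_2), c(v)\}$ realizes each of the two colors at least twice, killing every potential uniquely colored vertex on any $u$-$v$ path. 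The remaining cases, when some $S_i$ is monochromatic, branch on whether $|S_i| \geq 2$ (which forces $c_i$ to have the opposite color in order to ensure intra-$S_i$ conflict-free connectivity) or $|S_i| = 1$; in each sub-case an inter-block pair with appropriately chosen colors again over-counts both colors on every admissible $u$-$v$ path.

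The main obstacle is the monochromatic sub-cases of the necessary direction, where intra-block conflict-free connectivity constraints on the colors of $c_1, c_2$ must be combined with inter-block over-counting to exclude all $2$-colorings. Once this case analysis is carried out, every $2$-coloring is defeated by some vertex pair, and hence $vcfc(G) \geq 3$, completing the contrapositive and the theorem.
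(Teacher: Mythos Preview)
This theorem is only quoted in the present paper from \cite{LZZMZJ}; it appears in the introduction as background and is not proved here, so there is no in-paper argument to compare yours against.

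That said, your plan is correct. For sufficiency, coloring a single vertex $w$ (an arbitrary vertex when $G$ is $2$-connected, the unique cut-vertex otherwise) with the second color and routing every pair through $w$ via the three-vertices-on-a-common-path property of $2$-connected blocks is exactly the right construction; your remark that the relevant block has at least three vertices whenever two non-$w$ vertices lie in it is what makes the routing go through. For the contrapositive, choosing two end-blocks $B_1,B_2$ with distinct attaching cut-vertices $c_1,c_2$ is legitimate once there are at least two cut-vertices, and since every $S_1$--$S_2$ path contains all four of $u,c_1,c_2,v$, arranging both colors to occur at least twice among these four vertices does defeat every candidate conflict-free path regardless of what other internal vertices the path picks up. The monochromatic sub-cases close as you indicate: whether $|S_i|\ge 2$ (use an intra-$B_i$ pair, which must stay in $B_i$) or $|S_i|=1$ (use the leaf--cut-vertex edge, which is the unique path), the coloring is forced to put $c_i$ in the color opposite to that of $S_i$, after which the inter-block over-counting argument applies verbatim. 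There is no genuine gap.
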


\begin{thm}{\upshape\cite{LZZMZJ}}
Let $P_n$ be a path on $n$ vertices. Then $vcfc(P_n)=\lceil\log_2(n+1)\rceil$.
\end{thm}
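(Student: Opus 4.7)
The plan is to prove the identity by establishing matching upper and lower bounds, each exploiting a recursive halving natural to paths.

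For the upper bound $vcfc(P_n)\le \lceil\log_2(n+1)\rceil$, I would proceed by induction on $n$ (the cases $n=1,2$ being immediate). Set $m=\lceil n/2\rceil$, assign to $v_m$ a fresh color $k$, and inductively color the two sub-paths $v_1\cdots v_{m-1}$ and $v_{m+1}\cdots v_n$ (each on at most $\lfloor n/2\rfloor$ vertices) using colors from $\{1,\dots,k-1\}$. Any pair of vertices is joined by a unique subpath of $P_n$: if it contains $v_m$, then $v_m$ is the only vertex of color $k$ on it and witnesses conflict-freeness; otherwise it lies entirely in one half and inherits a conflict-free certifying vertex from the inductive coloring. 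Verifying that this uses no more than $\lceil\log_2(n+1)\rceil$ colors reduces to the easy inequality $\lceil\log_2(\lfloor n/2\rfloor+1)\rceil+1\le\lceil\log_2(n+1)\rceil$.

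For the lower bound, let $g(k)$ denote the largest $n$ with $vcfc(P_n)\le k$; I would show $g(k)\le 2g(k-1)+1$ with base $g(1)=1$, which iterates to $g(k)\le 2^k-1$ and gives $vcfc(P_n)\ge\lceil\log_2(n+1)\rceil$. Given any valid $k$-coloring of $P_n$, the whole path $P_n$ must contain some vertex $v_m$ whose color $c$ appears nowhere else on $P_n$ (otherwise the pair $v_1,v_n$ admits no conflict-free path). Because $c$ is used only at $v_m$, the two side-paths $v_1\cdots v_{m-1}$ and $v_{m+1}\cdots v_n$ draw their colors from the remaining $k-1$. Moreover, each side-path is itself conflict-free vertex-connected, since every subpath of a side-path is also a subpath of $P_n$ and its certifying vertex, lying inside the subpath, lies in that side-path. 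Hence $m-1,\;n-m\le g(k-1)$, yielding $n=(m-1)+1+(n-m)\le 2g(k-1)+1$.

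The key obstacle is the lower-bound recursion, specifically justifying that the two halves use only $k-1$ colors between them; this rests on the sharper observation that the color singled out at $v_m$ must be \emph{globally} unique on the path, so it cannot be reused on either half. Once that point is established, the descent $g(k)\le 2g(k-1)+1$ is forced, and the remaining work (base cases and the arithmetic linking $g(k)\le 2^k-1$ with $\lceil\log_2(n+1)\rceil$) is routine.
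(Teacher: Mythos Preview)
The paper does not supply its own proof of this theorem; it is quoted from \cite{LZZMZJ} as background, so there is no in-paper argument to compare against. Your proposal stands on its own and is correct: the divide-and-conquer upper bound and the recursion $g(k)\le 2g(k-1)+1$ for the lower bound are exactly the natural (and standard) arguments, and your identification of the delicate point---that the witness color for the full path $v_1\cdots v_n$ is \emph{globally} unique, hence absent from both halves---is precisely what makes the lower-bound recursion go through. One minor remark: in the upper-bound step you should check the base case $n=2$ explicitly (two vertices with the same color fail, so $vcfc(P_2)=2=\lceil\log_2 3\rceil$), and the inequality $\lceil\log_2(\lfloor n/2\rfloor+1)\rceil+1\le\lceil\log_2(n+1)\rceil$ follows cleanly from $\lfloor n/2\rfloor+1=\lceil(n+1)/2\rceil\le 2^{\lceil\log_2(n+1)\rceil-1}$ for $n\ge 2$.
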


\begin{thm}{\upshape\cite{LZZMZJ}}
Let $T$ be a tree of order $n\geq 3$ and diameter $d(T)$. Then
\begin{eqnarray*}
\max\{\chi(T),\lceil\log_{2}(d(T)+2)\rceil\}\leq vcfc(T)\leq
\log_{\frac{3}{2}}n.
\end{eqnarray*}
\end{thm}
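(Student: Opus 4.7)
The plan is to treat the lower and upper bounds separately, since they rest on quite different ideas.

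For the lower bound $vcfc(T)\geq \max\{\chi(T),\lceil\log_2(d(T)+2)\rceil\}$, I would handle each term of the maximum in turn. To see $vcfc(T)\geq \chi(T)$, pick any edge $uv\in E(T)$; since $T$ is a tree, the unique $u$--$v$ path is just the edge itself, and for a two-vertex path to be conflict-free the two endpoints must carry distinct colors. Thus every conflict-free vertex-coloring of $T$ is automatically a proper coloring, and so uses at least $\chi(T)$ colors. To see $vcfc(T)\geq \lceil\log_2(d(T)+2)\rceil$, fix a diametrical path $P$ on $d(T)+1$ vertices. In a tree, the $x$--$y$ path for any $x,y\in V(P)$ is a subpath of $P$, so the restriction of a conflict-free vertex-coloring of $T$ to $V(P)$ is itself a conflict-free vertex-coloring of the path $P_{d(T)+1}$. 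Invoking the earlier identity $vcfc(P_n)=\lceil\log_2(n+1)\rceil$ applied with $n=d(T)+1$ yields the claimed bound.

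For the upper bound $vcfc(T)\leq \log_{3/2}n$, I would induct on $n$ using a centroid-style decomposition. The structural ingredient is the standard fact that any tree of order $n$ contains a vertex $v$ whose removal leaves components each of order at most $2n/3$; a centroid satisfies the stronger $\lfloor n/2\rfloor$ bound and so works. Assign color $1$ to $v$, and recursively color each component $T_i$ of $T-v$ with a conflict-free vertex-coloring drawn from a fresh palette $\{2,3,\ldots,k\}$ using at most $\log_{3/2}|V(T_i)|\leq \log_{3/2}(2n/3)$ colors. For a pair of vertices in the same component, the recursive coloring furnishes a conflict-free path confined to that component. For a pair in two different components, the unique path in $T$ traverses $v$, and since color $1$ appears nowhere else in $T$, the vertex $v$ serves as the unique witness on that path. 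Combining the recursive count with the extra color for $v$ gives $vcfc(T)\leq \log_{3/2}(2n/3)+1=\log_{3/2}n$.

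The step I expect to be the main obstacle is not the recursion but the base-case and rounding bookkeeping in the upper bound: one must verify that the inductive hypothesis cleanly absorbs the ``$+1$'' contributed by $v$ and that very small subtrees do not push the count above $\log_{3/2}n$. A secondary subtlety is the existence of the splitting vertex when $T$ has two centroids; however either centroid fulfills the component-size requirement, so the argument is unaffected. Everything else reduces to the unique-path property of trees and careful palette management between $v$ and its children's subtrees.
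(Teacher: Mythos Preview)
The paper under review does not actually prove this theorem: it is quoted without proof from the cited source \cite{LZZMZJ} as background material in the introduction, so there is no in-paper argument to compare your proposal against.

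Judged on its own, your outline is correct and follows the natural line. The two lower-bound arguments are valid exactly as you state them: adjacency in a tree forces distinct colors on endpoints (giving $\chi(T)$), and restricting any conflict-free vertex-coloring to a diametrical path together with $vcfc(P_m)=\lceil\log_2(m+1)\rceil$ yields the logarithmic term. For the upper bound, the centroid recursion $vcfc(T)\le 1+\max_i vcfc(T_i)$ with $|V(T_i)|\le\lfloor n/2\rfloor$ is the right mechanism, and the identity $1+\log_{3/2}(2n/3)=\log_{3/2}n$ closes the step.

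The base-case worry you flag is genuine but easily disposed of. Verify $n\in\{3,4,5\}$ directly: the centroid leaves components of order at most $2$, so $vcfc(T)\le 1+2=3$, while $\log_{3/2}3>2$ and $\log_{3/2}4,\log_{3/2}5>3$. For $n\ge 6$ one has $\lfloor n/2\rfloor\ge 3$, so components of order $\ge 3$ are covered by the inductive hypothesis, while components of order $1$ or $2$ need at most $2$ colors, and $2\le\log_{3/2}(n/2)$ once $n\ge 6$; hence $1+\max_i vcfc(T_i)\le 1+\log_{3/2}(n/2)<\log_{3/2}n$ since $\log_{3/2}2>1$.

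One wording point: by ``fresh palette'' you should mean a single palette $\{2,\dots,k\}$ shared across all components of $T-v$ (fresh only relative to color $1$). Using genuinely disjoint palettes per component would destroy the color count; your correctness argument already uses only that color $1$ is unique to $v$, so the shared palette is what you intend.
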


In \cite{LZZMZJ}, Li et al. posed, as a conjecture, and Li and Wu in \cite{LW} verified the following result.

\begin{thm}{\upshape\cite{LW}}
For any connected graph $G$ with $n$ vertices, $vcfc(G)\leq vcfc(P_n)$.
\end{thm}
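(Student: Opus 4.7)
The plan is to reduce the statement to the tree case via a spanning tree argument, and then to induct on $n$ using a centroid decomposition. First I would fix a spanning tree $T$ of $G$ and observe that any vertex-coloring of $V(G)=V(T)$ that makes $T$ conflict-free vertex-connected also makes $G$ conflict-free vertex-connected, since every pair of vertices of $G$ is joined by the unique tree path in $T$, which is a path in $G$. Hence it suffices to prove $vcfc(T)\leq \lceil \log_2(n+1)\rceil$ for every tree $T$ on $n$ vertices, matching the value of $vcfc(P_n)$ established earlier.

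For the tree case, set $k=\lceil \log_2(n+1)\rceil$, so $n\leq 2^k-1$, and proceed by induction on $n$. Pick a centroid $v$ of $T$; then every component of $T-v$ has at most $\lfloor n/2\rfloor \leq 2^{k-1}-1$ vertices. Call these components $T_1,\dots,T_m$. By the inductive hypothesis, each $T_i$ admits a conflict-free vertex-coloring using colors from $\{1,\dots,k-1\}$. Color each $T_i$ in this way independently, and assign the fresh color $k$ to $v$.

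The verification is the heart of the argument. For two vertices $u,w$ that lie in the same component $T_i$, the unique tree path from $u$ to $w$ stays entirely inside $T_i$, so by induction it contains a vertex whose color is unique on the path. If $u$ and $w$ lie in different components, or if one of them is $v$, then the tree path between them passes through $v$, and since $v$ is the only vertex bearing color $k$, it provides a color used exactly once on the path. Thus the coloring uses $k=\lceil \log_2(n+1)\rceil$ colors and witnesses $vcfc(T)\leq k$, completing the induction and therefore the bound for $G$.

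The main obstacle I expect is the bookkeeping around the centroid split and the base case: one must check the arithmetic $\lfloor(2^k-1)/2\rfloor=2^{k-1}-1$ so the inductive hypothesis really applies to every $T_i$, handle trivially small trees (a single vertex, for which $\lceil\log_2 2\rceil=1$ color suffices), and confirm that a centroid exists whenever $n\geq 2$. A minor subtlety is that a component $T_i$ may be much smaller than $2^{k-1}-1$, in which case it is colored with strictly fewer than $k-1$ colors; this causes no problem because the argument only uses that colors $1,\dots,k-1$ are available and that color $k$ is reserved solely for $v$. Once these details are in place the construction goes through cleanly, giving the sharp bound $vcfc(G)\leq vcfc(P_n)$.
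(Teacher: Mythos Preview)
The paper itself does not prove this statement; it is quoted in the introduction as a result of Li and Wu \cite{LW}, and no proof is supplied here. So there is nothing in the present paper to compare your argument against.

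That said, your proof is correct and is the natural approach. The spanning-tree reduction $vcfc(G)\le vcfc(T)$ is immediate since the unique tree path between any two vertices is already a path in $G$. The centroid recursion then gives $vcfc(T)\le\lceil\log_2(n+1)\rceil$ cleanly: the arithmetic $\lfloor(2^k-1)/2\rfloor=2^{k-1}-1$ holds, the base case $n=1$ needs one color, every tree has a centroid, and the reserved colour $k$ on the centroid guarantees a uniquely coloured vertex on any path crossing between components. This centroid-plus-fresh-colour scheme is in fact the standard way such logarithmic vertex-colouring bounds are obtained, so your write-up is in line with what one would expect the original proof in \cite{LW} to look like.
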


Hence, they got a tight upper bound for the conflict-free vertex-connection number of connected graphs of order $n$.
In the same paper, Li and Wu posed, as a conjecture, and Chang et al. in \cite{CJLZ} verified the following result.

\begin{thm}{\upshape\cite{CJLZ}}
For a tree $T$ of order $n$, $cfc(T)\geq cfc(P_n)=\lceil\log_2n\rceil$.
\end{thm}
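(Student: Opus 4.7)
The plan is to combine Lemma \ref{path} with a parity-signature argument: since $cfc(P_n)=\lceil\log_2 n\rceil$ is already known, it suffices to show that if a tree $T$ on $n$ vertices admits a conflict-free $k$-edge-coloring, then $n\le 2^k$.

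I would begin by fixing an arbitrary root $r\in V(T)$ and, for each vertex $v$, letting $P_v$ denote the unique $r$-$v$ path in $T$. Given a conflict-free coloring with colors from $\{1,2,\ldots,k\}$, assign to each $v$ a parity signature $S_v\subseteq\{1,\ldots,k\}$ defined by $i\in S_v$ iff color $i$ appears an odd number of times on $P_v$, with $S_r=\emptyset$.

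The heart of the argument is the injectivity of the map $v\mapsto S_v$. Suppose $S_u=S_v$ for some $u\ne v$. Because $T$ is a tree, the unique $u$-$v$ path has edge set $E(P_u)\,\triangle\,E(P_v)$, so for every color $i$, its multiplicity on the $u$-$v$ path is congruent modulo $2$ to (multiplicity on $P_u$) $+$ (multiplicity on $P_v$), which is even by hypothesis. Hence every color appears an even number of times on the $u$-$v$ path, in particular never exactly once. But this is the \emph{only} path between $u$ and $v$ in $T$, so the coloring fails to be conflict-free connected, a contradiction. Injectivity then yields $n\le 2^k$ and hence $cfc(T)\ge\lceil\log_2 n\rceil$.

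I do not anticipate a serious obstacle: the uniqueness of paths in a tree makes the symmetric-difference identity immediate, and the remainder is an elementary counting bound. The only delicate point is the recognition that in a tree one cannot fall back on alternative $u$-$v$ paths, so the parity signature is genuinely a certificate of connectivity failure; it is exactly this feature that makes the argument work for trees and that would collapse in general graphs, where a second path could supply the required uniquely-colored edge.
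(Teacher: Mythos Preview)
Your argument is correct. The parity-signature map $v\mapsto S_v$ is well defined, and the key identity $E(P_{uv})=E(P_u)\,\triangle\,E(P_v)$ holds in any tree (with $P_{uv}$ the unique $u$--$v$ path), so the multiplicity of each color on $P_{uv}$ is congruent modulo $2$ to the sum of its multiplicities on $P_u$ and $P_v$; hence $S_u=S_v$ forces every color to occur an even number of times on $P_{uv}$, contradicting conflict-freeness. Injectivity gives $n\le 2^k$ and therefore $cfc(T)\ge\lceil\log_2 n\rceil$, while $cfc(P_n)=\lceil\log_2 n\rceil$ is Lemma~\ref{path}.

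As for comparison: the present paper does not supply a proof of this theorem at all; it merely quotes the result from \cite{CJLZ}. So there is no ``paper's own proof'' here to weigh your approach against. Your parity-vector argument is a clean, self-contained proof and is in fact one of the standard ways to establish the bound; it is essentially the same idea used to show that conflict-free colorings of paths require $\lceil\log_2 n\rceil$ colors, lifted to trees via the symmetric-difference observation. Nothing further is needed.
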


As can be seen in Theorem 1.1, the conflict-free connection number of graphs without cut-edges has been obtained. Thus determining the value of $cfc(G)$ for graphs $G$ with cut-edges becomes the main task. Trees are extremal such graphs for which every edge is a cut-edge. For a tree $T$ we can build a hypergraph $H$ as follows. The hypergraph $H_{EP}(T)=(\mathcal{V},\mathcal{E})$ has $\mathcal{V}(H_{EP})=E(T)$ and $\mathcal{E}(H_{EP})=\{E(P)|$
$P$ is a path of $T\}$. One can easily see that the conflict-free chromatic number of the hypergraph $H$ is just the conflict-free connection number of $T$.
For more results we refer to \cite{CDHJLS, CHLMZ, CJLZ, DLLMZ}. Nevertheless, most of them are about the graph structural characterizations. The graph structural analytic method may be more useful to handle graphs with certain characterizations such as 2-edge-connected graph and some given graph classes. But a polynomial algorithm is applicable to all general graphs. However very few results on this have been obtained for now. Thus we address the computational
aspects of the (strong)conflict-free (vertex-)connection colorings in this paper.

First of all, we pose the definition of the strong conflict-free connection as follows:
\begin{df}
A path in an edge-colored graph $G$ is called a\emph{ conflict-free path }if there has a color appearing only once on the path. The graph $G$ is called \emph{strongly conflict-free connected} if there is a conflict-free path which is the shortest among all $u$-$v$-paths between each pair of distinct vertices $(u,v)$ of $G$ and the corresponding edge-coloring is called the \emph{strong conflict-free coloring} of $G$. The minimum number of colors required to make $G$ strongly conflict-free connected is called the \emph{strong conflict-free connection number} of $G$, denoted by $scfc(G)$.
\end{df}

Combining all kinds of colorings
we present above, it is natural to ask such a question:

\begin{prob}
Given an integer $k\geq 1$ and a connected graph $G$, is it $NP$-hard or polynomial-time solvable to answer each of
the following questions ?

\noindent(a) Is $rc(G)\leq k$ ?\\
(b) Is $pc(G)\leq k$ ?\\
(c) Is $mc(G)\geq k$ ?\\
(d) Is $cfc(G)\leq k$? (Is $vcfc(G)\leq k$ ? for the vertex version)\\
(e) Is $scfc(G)\leq k$? (can be also referred to as the {\bf k-strong conflict-free connectivity problem} in the following context)
\end{prob}

For general graphs, Ananth et al. proved in \cite {AN} that Question $(a)$ is $NP$-hard.
Chakraborty et al. proved in \cite{CFMY} that Question $(a)$ is $NP$-complete even if $k=2$.
The answers for Questions $(b), \ (c)$, $(d)$ and $(e)$ remain unknown. For a tree $T$, Question $(a)$ is easy
since $rc(T)=n-1$, and Question $(b)$ is also easy since $pc(T)=\Delta (T)$, where $n$ is the order of $T$
and $\Delta(T)$ is the maximum degree of $T$. However, the complexity for Question $(d)$ is unknown
even if $G$ is a tree $T$.

Actually, {\bf Problem 1} is equivalent to the following statement:

\begin{prob}
Given an integer $k\geq 1$ and a connected graph $G$, determine whether there is a $k$-edge (or vertex) coloring to make $G$

\noindent(a) rainbow connected.\\
(b) proper connected.\\
(c) monochromatically connected.\\
(d) conflict-free connected (or conflict-free vertex-connected).\\
(e) strongly conflict-free connected.
\end{prob}

The following is a weaker version for {\bf Problem 1}:

\begin{prob}
Given a connected graph $G$ with $n$ vertices and $m$ edges and a coloring $c: E \ (or\ V)\rightarrow \{1,2,\cdots,k\} \ (k\geq 1)$ of the graph,
for each pair of distinct vertices $u,v$ of $G$, determine whether there is a path $P$ between $u,v$ such that

\noindent(a) $P$ is a rainbow path.\\
(b) $P$ is a proper path.\\
(c) $P$ is a monochromatic path.\\
(d) $P$ is a conflict-free path.\\
(e) $P$ is a strong conflict-free path.
\end{prob}

For general graphs, Chakraborty et al. proved in \cite{CFMY} that  Question $(a)$ is $NP$-complete. Recently, Ozeki \cite{Ozeki}
confirmed that  Question $(b)$ is polynomial-time solvable. It is not difficult to see that Question $(c)$ can also be solved in
polynomial-time, just by checking all subgraphs each being induced by the set of edges with a same color.

There is also another weaker version for {\bf Problem 1}(e).

\begin{prob}[k-subset strong conflict-free connectivity problem]
Given a graph $G$ and a set $P\subset V\times V$, decide whether there is an edge-coloring of $G$ with $k$ colors such that all pairs $(u,v)\in P$ are strongly conflict-free connected.
\end{prob}

The article is arranged as follows: Next section, we will provide two
polynomial-time algorithms for {\bf Problem 3} $(d)$ and {\bf Problem 3} $(e)$. In section 3, we present the complexity results of the strong conflict-free connection version as proving that it is NP-complete to answer {\bf Problem 4} when $k\geq 2$ and {\bf Problem 1}(e) when $k\geq2$.

\section{ Polynomial-time algorithms}

Before presenting our main theorem for Question $(d)$ in {\bf Problem 3}, some auxiliary lemmas are needed.

\begin{lem}{\upshape\cite{CJV}}\label{path}
Let $u,v$ be distinct vertices and $e=xy$ be an edge of a $2$-connected graph $G$. Then there is a $u\text{-}v$-path in $G$ containing the edge $e$.
\end{lem}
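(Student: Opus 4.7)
The plan is to reduce the statement to a classical characterization of $2$-connected graphs, namely that any two edges of a $2$-connected graph on at least three vertices lie on a common cycle. First, I would dispose of the degenerate cases where $\{u,v\}\cap\{x,y\}\neq\emptyset$: if $\{u,v\}=\{x,y\}$ then $e$ itself is the desired path, while if, say, $u=x$ and $v\notin\{x,y\}$, then $G-x$ is connected by $2$-connectedness of $G$, so a $y$-$v$-path in $G-x$ prepended with the edge $xy$ gives a $u$-$v$-path through $e$.

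For the main case $\{u,v\}\cap\{x,y\}=\emptyset$, I would augment $G$ by adding a new vertex $z$ together with exactly two edges $zu$ and $zv$, calling the resulting graph $G^+$. The next step is to verify that $G^+$ remains $2$-connected: it is connected and has at least four vertices, and no vertex is a cut-vertex because (i) $G^+ - z = G$ is connected, and (ii) for any $w\in V(G)$, the graph $G-w$ is connected (since $G$ is $2$-connected) and $z$ is still attached to this subgraph via whichever of $u,v$ differs from $w$.

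With $G^+$ known to be $2$-connected, I would apply the standard result that any two edges of a $2$-connected graph lie on a common cycle to the pair of edges $zu$ and $e=xy$, obtaining a cycle $C$ in $G^+$ containing both. Since $z$ has degree exactly $2$ in $G^+$ with neighbours $u$ and $v$, the cycle $C$ is forced to use the edge $zv$ as well. Deleting $z$ from $C$ therefore yields a $u$-$v$-path in $G$ that still contains $e$, which is what we want.

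The only real obstacle is the appeal to the "two edges on a common cycle" characterization of $2$-connectivity; if one prefers a self-contained argument, it can be replaced by a direct application of Menger's theorem, choosing two internally disjoint $u$-$x$-paths and two internally disjoint $y$-$v$-paths in $G$ and splicing them at their first common vertices to build the required path through $e$. I expect the cycle-based route to give the cleaner presentation, and no technical difficulty beyond careful bookkeeping in the boundary cases is anticipated.
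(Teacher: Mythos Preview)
The paper does not actually prove this lemma; it is quoted from \cite{CJV} and used as a known ingredient, so there is no in-paper argument to compare against. That said, your proposed proof is correct. The augmentation trick---adding a new vertex $z$ of degree two adjacent to $u$ and $v$, verifying that $G^{+}$ remains $2$-connected, and then applying the classical fact that any two edges of a $2$-connected graph lie on a common cycle to $zu$ and $e$---is a standard and clean route; since $z$ has degree exactly two, the cycle is forced through $zv$ as well, and removing $z$ leaves the desired $u$-$v$-path through $e$. Your handling of the boundary cases $\{u,v\}\cap\{x,y\}\neq\emptyset$ is also fine. No gaps.
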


Let $x$ be a vertex and $Y$ be a vertex-set of a connected graph $G$, then a family of $k$ internally disjoint
$(x,Y)$-paths whose terminal vertices are pairwise distinct is referred to as a $k\text{-}fan$ from $x$ to $Y$.

With this, we have the famous Fan Lemma.

\begin{lem}
Let $G$ be a $k$-connected graph, and $x$ be a vertex of $G$, and let $Y\subseteq V\backslash \{x\}$ be a set of at least $k$
vertices of $G$. Then there exists a $k\text{-}fan$ in $G$ from $x$ to $Y$.
\end{lem}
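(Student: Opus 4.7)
The plan is to reduce the Fan Lemma to Menger's theorem by a standard auxiliary-graph construction. I would form $G'$ from $G$ by adjoining a new vertex $y$ and joining $y$ to every vertex of $Y$. Since $x\notin Y$, the vertices $x$ and $y$ are non-adjacent in $G'$, so by Menger's theorem the maximum number of internally disjoint $x$-$y$ paths in $G'$ equals the minimum size of an $x$-$y$ vertex cut in $G'$. If I can show that every such cut has size at least $k$, then $G'$ contains $k$ internally disjoint $x$-$y$ paths, and deleting $y$ from each will yield the desired $k$-fan in $G$.

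The heart of the argument is the cut lower bound. Let $S$ be any $x$-$y$ separator in $G'$; since $x,y\notin S$, we have $S\subseteq V(G)$. I would split into two cases. If $Y\subseteq S$, then immediately $|S|\geq|Y|\geq k$. Otherwise pick some $v\in Y\setminus S$. The key subtlety is that $v$ must be non-adjacent to $x$ in $G$; otherwise the two-edge path $x,v,y$ avoids $S$ in $G'$, contradicting that $S$ separates $x$ from $y$. Since deleting $y$ from $G'-S$ yields $G-S$, and removing a vertex cannot reconnect components, $v$ remains separated from $x$ in $G-S$. Hence $S$ is an $x$-$v$ separator in $G$ between the non-adjacent pair $x,v$, and applying Menger's theorem inside $G$ (using its $k$-connectivity) gives $|S|\geq k$.

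With the bound $|S|\geq k$ in hand, Menger's theorem delivers $k$ internally disjoint $x$-$y$ paths $P_1,\dots,P_k$ in $G'$. Each $P_i$ has length at least two since $xy\notin E(G')$, so its second-to-last vertex $v_i$ lies in $Y$. Because the paths are internally disjoint in $G'$ and all share the endpoint $y$, the vertices $v_1,\dots,v_k$ are pairwise distinct. Stripping $y$ off each $P_i$ produces an $x$-$v_i$ path in $G$, and these paths remain pairwise internally disjoint in $G$: any common internal vertex would also be a common internal vertex in the $G'$-versions. This family is exactly a $k$-fan from $x$ to $Y$.

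The only place where real care is needed is the aforementioned adjacency subtlety: if a vertex $v\in Y\setminus S$ happened to be adjacent to $x$, then $S$ would fail to separate $x$ from $v$ in $G$ (they are joined by an edge), and Menger applied to $G$ would be vacuous. I expect this to be the main obstacle, but it is resolved by noting that such an adjacency also creates the short path $x,v,y$ in $G'$ that defeats $S$ as a separator, so the problematic configuration simply cannot arise. Once this point is dispatched, the remainder of the proof is a routine application of Menger's theorem.
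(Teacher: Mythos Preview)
Your argument via the auxiliary vertex $y$ and Menger's theorem is correct and is in fact the standard textbook proof of the Fan Lemma. The paper itself does not prove this statement at all: it is simply quoted as ``the famous Fan Lemma'' and used as a black box, so there is no in-paper proof to compare against. Your route is exactly the one found in the reference the paper cites (Bondy--Murty), so in that sense you have reproduced the intended argument.

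One small remark: the ``adjacency subtlety'' you flag is not really an obstacle. Once you know $S$ separates $x$ from $y$ in $G'$ and that $v\in Y\setminus S$ lies in the component of $y$, it follows immediately that $S$ separates $x$ from $v$ in $G'-y=G$, and then $|S|\ge k$ comes straight from the definition of $k$-connectivity, with no need to invoke Menger a second time or to verify non-adjacency of $x$ and $v$ beforehand. Your handling of it is correct, just slightly more elaborate than necessary. You may also wish to truncate each stripped path at its \emph{first} vertex in $Y$ if one reads ``$(x,Y)$-path'' in the strict sense of meeting $Y$ only at its endpoint; internal disjointness of the original $x$--$y$ paths guarantees the truncation points remain distinct.
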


For a connected graph $G$, a vertex of $G$ is called {\it a separating vertex} if its removal will leave $G$
splitting into two nonempty connected subgraphs. We call the graph {\it nonseparable} if it is connected
without separating vertices. A {\it block} of the graph is a subgraph which is nonseparable and maximal
in this property. We can construct a bipartite graph $B(G)$ for every connected graph $G$ as follows:
let $V(B(G))=(\mathcal{B},S)$ where $\mathcal{B}$ represents the set of all blocks in $G$ and $S$ is
the set of separating vertices. The block $B\in \mathcal{B}$ and vertex $s\in S$ are adjacent if and only
if $s\in B$ in $G$. It is clear that $B(G)$ is also a tree, we call it the {\it block tree}.

\begin{lem}\label{edge}
For a connected graph $G$, let $u,v\in V(G)$, $st\in E(G)$. Then there is no $u\text{-}v$-path
containing edge $st$ if and only if there exists a vertex $z$ such that neither $u$ nor $v$ is
connected to $s$ or $t$ in the graph $G-z$.
\end{lem}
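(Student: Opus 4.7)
The plan is to recognize this lemma as essentially a Menger-type statement: a $u$-$v$-path containing the edge $st$ corresponds to a pair of internally vertex-disjoint paths, one from $u$ to $s$ and one from $v$ to $t$ (or vice versa), glued through $st$; hence its existence is equivalent to the non-existence of a single vertex whose removal separates $\{u,v\}$ from $\{s,t\}$. I would prove the two directions separately, handling the easier ``if'' direction by contradiction and the harder ``only if'' direction through the block decomposition of $G$ combined with Lemma~2.1.

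For the ``if'' direction, assume a vertex $z$ as described exists and suppose, for contradiction, that $P=u\cdots st\cdots v$ is a $u$-$v$-path through $st$. If $z\notin V(P)$, then the subpath of $P$ from $u$ to $s$ lies entirely in $G-z$, contradicting the hypothesis that $u$ is not connected to $s$ there. If $z$ is an interior vertex of $P$, it lies on exactly one of the two subpaths $u\cdots s$ or $t\cdots v$, and the complementary subpath survives in $G-z$, again violating the hypothesis. The boundary cases $z\in\{u,v,s,t\}$ are treated analogously, using the vacuous interpretation of ``connected to $x$ in $G-z$'' when $x\notin V(G-z)$.

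For the ``only if'' direction, let $B^{*}$ be the unique block of $G$ containing $st$. If $B^{*}$ is $2$-connected, I would construct a $u$-$v$-path through $st$, contradicting the hypothesis, in each of the following scenarios via Lemma~2.1: both $u,v\in V(B^{*})$; exactly one of $u,v$ lies in $V(B^{*})$ and is not the cut vertex of $B^{*}$ through which the other is reached; and $u,v\notin V(B^{*})$ are attached to $B^{*}$ through \emph{different} cut vertices. The surviving subcases are that $u,v\notin V(B^{*})$ are attached to $B^{*}$ through a common cut vertex $c$ (take $z=c$), or that one of $u,v\in V(B^{*})$ coincides with the cut vertex through which the other is reached (take $z$ to be that coinciding vertex). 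In each, $B^{*}\setminus\{z\}$ is still connected and contains at least one of $s,t$, while $u$ and $v$ lie in components of $G-z$ disjoint from it. If instead $B^{*}=\{s,t\}$ is a bridge, removing $st$ splits $G$ into two components $C_s\ni s$ and $C_t\ni t$, and the hypothesis forces $u,v$ to lie on the same side; WLOG $u,v\in C_s$, and then $z=s$ severs $C_s\setminus\{s\}\supseteq\{u,v\}$ from $t$ and its side.

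I expect the main obstacle to be the careful bookkeeping in the block-tree case analysis, especially the degenerate scenarios where $u$ or $v$ coincides with $s$, $t$, or a cut vertex of $B^{*}$. Each such configuration must be checked to yield either an explicit path through $st$ or a legitimate choice of $z$, with the vacuous-interpretation convention applied consistently for $z\in\{u,v,s,t\}$. Beyond that bookkeeping, the lemma is essentially Lemma~2.1 plus a one-vertex-separator observation in the spirit of Menger's theorem.
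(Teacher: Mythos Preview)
Your proposal is correct and follows essentially the same route as the paper: both directions are handled exactly as the authors do, with the ``if'' part by a short contradiction argument and the ``only if'' part via the block decomposition together with Lemma~2.1. Your case analysis is somewhat more explicit than the paper's (you separate the bridge case and the degenerate coincidences $u,v\in\{s,t\}$ or $u,v$ equal to a cut vertex of $B^{*}$, which the paper leaves implicit), but the underlying argument is the same.
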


\noindent{\bf Proof of sufficiency:} Suppose there exists a $u\text{-}v$-path containing $st$.
Then obviously $z$ must appear at least twice in this path, a contradiction.

\noindent{\bf Proof of necessity:} We claim that $G$ is not $2$-connected since otherwise
Lemma \ref{path} will lead to a contradiction.

Assume that $st\in B_1$, $u\in B_2$ and $v\in B_3$ where $B_i \ (i=1,2,3)$ is the block of
$G$. Then $B_1=B_2=B_3$ cannot happen since otherwise a $u\text{-}v$-path containing $st$ can
be found according to Lemma \ref{path}, a contradiction. If $B_2=B_3$, then the removal of
any separating vertex on the path of $B(G)$ between $B_1$ and $B_2$ will leave neither $u$ nor $v$
connected to $s$ or $t$. Consider the case that $B_2\neq B_3$. We claim that $B_1$ is not on the path
between $B_2$ and $B_3$ in $B(G)$, since otherwise a $u\text{-}v$-path can be chosen to go
through $st$ by applying Lemma \ref{path} to $B_1$, also a contradiction. At last, we consider
the deletion of the first separating vertex on the path of $B(G)$ from $B_1$ to $B_2$, this will
cause the disconnections we want. $\Box$

With a similar proof, one can get the corresponding lemma for vertex version.

\begin{lem}\label{vertex}
For a connected graph $G$, let $u,v,w\in V(G)$. Then there is no $u\text{-}v$-path
containing vertex $w$ if and only if there exists a vertex $z\neq w$ such that
neither $u$ nor $v$ is connected to $w$ in the graph $G-z$.
\end{lem}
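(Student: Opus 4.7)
The plan is to mirror the proof of Lemma \ref{edge} line by line, replacing the role played by the edge $st$ with the single vertex $w$. Lemma \ref{path} and the Fan Lemma are applied in exactly the same way; the sufficiency direction is essentially formal, and the necessity direction reduces to a block-tree case analysis. The only genuinely new wrinkle is that $w$ itself may be a separating vertex and so may not be contained in a unique block, and this case has to be isolated first.

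For sufficiency, suppose some $u$-$v$-path $P$ passes through $w$. Then $P$ decomposes into a $u$-to-$w$ subpath $P_1$ and a $w$-to-$v$ subpath $P_2$, internally disjoint. If $P_2$ avoided $z$ it would witness a $v$-$w$-connection in $G-z$, contradicting the hypothesis, and the same observation applied to $P_1$ forces $z\in V(P_1)$ as well. But then $z$ appears on both subpaths, hence twice on $P$, contradicting that $P$ is a path.

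For necessity, I first rule out that $G$ is $2$-connected: otherwise the Fan Lemma with $x=w$, $k=2$ and $Y=\{u,v\}$ yields two internally disjoint paths out of $w$, one ending at $u$ and one at $v$, whose concatenation is a $u$-$v$-path through $w$. So $G$ has a separating vertex and a nontrivial block tree $B(G)$. Picking blocks $B_1,B_2,B_3$ containing $w,u,v$ respectively, I copy the case analysis of Lemma \ref{edge}: the case $B_1=B_2=B_3$ and the case in which $B_1$ lies on the $B_2\text{-}B_3$ path in $B(G)$ are both excluded because Lemma \ref{path} would then produce a $u$-$v$-path through $w$ inside $B_1$. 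In every remaining configuration I take $z$ to be the first separating vertex on the $B(G)$-path from $B_1$ toward $B_2$ (or $B_3$, according to which of $B_2,B_3$ is separated from $B_1$); by construction $z$ lies on every $u$-$w$-path and on every $v$-$w$-path in $G$.

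The only step where the edge-version argument does not copy over cleanly is when $w$ is itself a separating vertex, so that the choice of $B_1$ is not canonical. Here I would handle things by first observing that if $u$ and $v$ lie in different components of $G-w$ then every $u$-$v$-path contains $w$, contradicting the hypothesis; hence $u,v$ belong to a common component $C$ of $G-w$ and the block-tree argument can be carried out inside the subgraph induced by $V(C)\cup\{w\}$, choosing $B_1$ to be the block of that subgraph incident with $w$ that lies closest to $B_2$ in $B(G)$. This is the step I expect to be the main obstacle, but once it is isolated the rest of the analysis is a routine translation of the edge-version proof.
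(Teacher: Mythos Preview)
Your proposal is correct and is exactly the paper's approach: the paper's entire proof of this lemma is the sentence ``With a similar proof, one can get the corresponding lemma for vertex version,'' so your plan to transcribe the proof of Lemma~\ref{edge} with the Fan Lemma standing in for Lemma~\ref{path} is precisely what the authors intend, and your extra care in isolating the case where $w$ is itself a separating vertex (and reducing to the induced subgraph on $V(C)\cup\{w\}$) fills in a detail the paper leaves implicit.

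One small correction: in your block-tree case analysis you twice invoke Lemma~\ref{path} to produce a $u$--$v$-path through $w$ inside a $2$-connected block, but Lemma~\ref{path} routes a path through a prescribed \emph{edge}, not a vertex; the Fan Lemma (applied with $x=w$ and $Y$ the two relevant attachment vertices of the block) is what you actually need there, just as you used it to rule out $G$ being $2$-connected.
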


The famous Depth-First Search(DFS) will be used in our algorithm. For a graph
$G$ with $n$ vertices and $m$ edges, the DFS starts from a root vertex $x$ and
goes as far as it can along a path, after that, it backtracks until finding
a new path and then explores it. The algorithm stops when all vertices of $G$
have been explored. As is well known, the time complexity for DFS is $\mathcal{O}(n+m)$.

\begin{thm}
There exists a polynomial-time algorithm to determine Question $(d)$ in {\bf Problem 3}.
The complexity for the edge version is at most $\mathcal{O}(n^3m^2)$,
and the complexity for the vertex version is at most $\mathcal{O}(n^4m)$.
\end{thm}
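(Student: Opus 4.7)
The plan is to reduce the question ``is there a conflict-free $u$-$v$-path?'' to a collection of concrete subproblems of the form ``does some $u$-$v$-path in a specified subgraph pass through a specified edge (or vertex)?'', which is precisely the content of Lemmas \ref{edge} and \ref{vertex}. The key observation for the edge version is that a $u$-$v$-path $P$ in $(G,c)$ is conflict-free if and only if there is some edge $e=st\in P$ whose color $i=c(e)$ is not used by any other edge of $P$. Equivalently, setting $G_e := G - \{e' : c(e')=i,\ e'\neq e\}$, the path $P$ is a $u$-$v$-path in $G_e$ that contains $e$, and conversely every $u$-$v$-path in $G_e$ that uses $e$ is automatically conflict-free in $G$, since color $i$ appears on it exactly once (on $e$).

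This motivates the following algorithm for the edge version. Loop over each edge $e=st$, construct $G_e$ in time $\mathcal{O}(m)$, and then for each pair $(u,v)$ test whether some $u$-$v$-path of $G_e$ contains $e$. By Lemma \ref{edge}, this test is equivalent to checking that no vertex $z$ of $G_e$ leaves both $u$ and $v$ disconnected from $\{s,t\}$ after deletion; for a fixed $z$, this is a single DFS in $G_e - z$ starting from $s$, so running over the $\mathcal{O}(n)$ candidates $z$ costs $\mathcal{O}(n(n+m))$. Iterating over the $\mathcal{O}(n^2)$ pairs and the $\mathcal{O}(m)$ edges yields a total running time of $\mathcal{O}(m\cdot n^2 \cdot n(n+m)) = \mathcal{O}(n^3 m^2)$, matching the claimed bound.

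For the vertex version the same strategy applies verbatim, with Lemma \ref{vertex} replacing Lemma \ref{edge}: a conflict-free $u$-$v$-path corresponds to a choice of witness vertex $w$ such that some $u$-$v$-path in $G_w := G - \{w' : c(w')=c(w),\ w'\neq w\}$ passes through $w$. The per-pair test now quantifies over vertices $z\neq w$ and again costs $\mathcal{O}(n(n+m))$, while the outer loop runs over $\mathcal{O}(n)$ candidate witness vertices rather than $\mathcal{O}(m)$ edges; this accounts for the revised bound $\mathcal{O}(n^4 m)$.

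The part requiring the most care is the reduction itself: I would want to verify cleanly that the existence of a conflict-free $u$-$v$-path in $G$ is equivalent to the existence of some single witness edge $e$ (resp.\ vertex $w$) for which a $u$-$v$-path of $G_e$ (resp.\ $G_w$) contains $e$ (resp.\ $w$). The forward direction is immediate: just pick any element whose color is unique on the given conflict-free path. The reverse direction uses that $G_e$ and $G_w$ were constructed precisely to delete every other element of the relevant color class, so any witness path is guaranteed to be conflict-free in $G$. Once this equivalence is in hand, the algorithm is completed by routine bookkeeping: building each $G_e$ (or $G_w$), invoking Lemma \ref{edge} (or \ref{vertex}) via DFS on the $\mathcal{O}(n)$ graphs of the form $G_e - z$ (or $G_w - z$), and summing the costs as above.
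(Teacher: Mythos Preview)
Your proposal is correct and follows essentially the same approach as the paper: both reduce the conflict-free path question to testing, for each candidate witness edge $e$ (or vertex $w$), whether $G_e = G - E_{c(e)} + e$ admits a $u$-$v$-path through $e$, and both resolve this test via Lemma~\ref{edge} (resp.\ Lemma~\ref{vertex}) with $\mathcal{O}(n)$ DFS calls per pair and per witness. The only detail you gloss over is that Lemma~\ref{edge} is stated for \emph{connected} graphs while $G_e$ may be disconnected; the paper patches this with a preliminary connectivity check (step~\textbf{5} of its Algorithm~1), which costs one extra DFS and does not affect the asymptotic bound.
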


\noindent{\bf Proof of the edge version:} Given $k\geq1$ and a connected graph $G$ with an
edge-coloring $c:E(G)\rightarrow\{1,2,\cdots,k\}$, let $E_i(i=1,2,\cdots,k)$ be the edge-set
containing all edges colored with $i$. We present our algorithm below:

\begin{algorithmic}
\State \rule[-0.1\baselineskip]{\textwidth}{1pt}
\textbf{Algorithm 1: Determining whether an edge-colored graph is conflict-free connected}
\begin{spacing}{0.6}
\State \rule[0.6\baselineskip]{\textwidth}{1pt}
\end{spacing}
\noindent\textbf{Input:} A given integer $k\geq1$ and a connected graph $G$ with $n$ vertices, $m$ edges and
an edge-coloring $c: E(G)\rightarrow \{1,2,\cdots,k\}$.\\
\textbf{Output:} Whether $G$ is conflict-free connected or not.\\
\textbf{1:} Check if there is an unselected pair of distinct vertices in $G$. If so,
pick one pair $(u,v)$, go to \textbf{2}; otherwise, go to \textbf{8}.\\
\textbf{2:} Set $i=0$, go to \textbf{3}.\\
\textbf{3:} Check if $i\leq k-1$. If so, $i:=i+1$, $G':=G-E_i$, go to \textbf{4}; otherwise, go to \textbf{9}.\\
\textbf{4:} For $(u,v)$, determine if there is an unselected edge $e$ in $E_i$. If so, pick $e=st$, set $G'':=G'+e$, go to \textbf{5}; otherwise, go to \textbf{3}.\\
\textbf{5:} Check if $u,v$ and $st$ are connected in $G''$. If so, go to \textbf{6}; otherwise, go to \textbf{4}.\\
\textbf{6:} For $(u,v)$ and $st$, determine if there is an unselected vertex in $G''$. If so, pick one vertex $z$, go to \textbf{7}; otherwise, go to \textbf{1}.\\
\textbf{7:} Determine if neither $u$ nor $v$ is connected to $s$ or $t$ in $G''-z$. If so, go to \textbf{4}; otherwise, go to \textbf{6}.\\
\textbf{8:} Return: $G$ is conflict-free connected under coloring $c$.\\
\textbf{9:} Return: $G$ is not conflict-free connected under coloring $c$.\\
\rule[0.7\baselineskip]{\textwidth}{1pt}
\end{algorithmic}

Let us first prove the algorithm above is correct. If for a pair of distinct vertices $(u,v)$,
there is no conflict-free path between them, then for any edge $e$ in $G$, there is
no $u\text{-}v$-path in $G-E_{c(e)}+e$ containing $e$. Thus according to Lemma \ref{edge},
for each $e$, there must be a vertex $z$ (step \textbf{6}) such that neither $u$ nor $v$ is
connected to $s$ or $t$ in $G''-z=G-E_{c(e)}+e-z$. As a result, after traversing every
edge in $G$, it comes to step \textbf{4}, then step \textbf{3} and finally step \textbf{9}
obtaining the right result that $G$ is not conflict-free connected.

If for $(u,v)$, there is a conflict-path between them, then there must exist an edge $e$
such that for any vertex $z$ in $G$, either $u$ or $v$ is connected to $s$ or $t$ in
$G''-z=G-E_{c(e)}+e-z$. Therefore, after repeating steps \textbf{7} and \textbf{6}
for some times, the running process will come to step \textbf{1} and then examine
the next pair of vertices. If all pair of vertices have been examined, it will announce
that $G$ is conflict-free connected. This shows the correctness of our algorithm.

For a fixed pair of vertices $(u,v)$ and a fixed edge $e=st$, to examine step \textbf{5},
we only need to apply the DFS algorithm appointing $s$ as the root vertex. Then for any
vertex $z$ of $G$, again apply the DFS algorithm to step \textbf{7}. Consequently we
get that the complexity is $\mathcal{O}((n+m)n+n+m)=\mathcal{O}(nm)$. Since there
are $\mathcal{O}(n^2)$ pair of vertices and $m$ edges in $G$, the overall
complexity is at most $\mathcal{O}(n^3m^2)$.

\noindent{\bf Proof of the vertex version:} With Fan Lemma and Lemma \ref{vertex},
it actually has analogous analysis with the edge version. The differences are as follows:
$(i)$ $V_i \ (1\leq i\leq k)$ shall take the place of $E_i \ (1\leq i\leq k)$, and $(ii)$ we will pick a vertex this time instead of an edge in step \textbf{4}
.
Because of this, an $m$ will be replaced by an $n$ in the complexity for
the edge version, so the time complexity for the vertex version is $\mathcal{O}(n^4m)$.

Besides, for a picked pair of vertices $(u,v)$, if $c(u)=c(v)$, then
the vertex-set $V_{c(u)}$ is not needed to consider in step \textbf{3}
since $c(u)$ can never be the unique color on any $u\text{-}v$-path; if
$c(u)\neq c(v)$, any vertex of $(V_{c(u)}\backslash u) \ (\text{or}\ (V_{c(v)}\backslash v))$
is not needed to add back after removing $(V_{c(u)}\backslash u) \ (\text{or}\ (V_{c(v)}\backslash v))$
from $G$ (like in step \textbf{4}) because the unique color has already exists on $u \ (\text{or}\ v)$.
This saves some operations compared to the algorithm for the edge version. Thus the complexity
for the vertex version is at most $\mathcal{O}(n^4m)$.  $\Box$

For Question $(e)$ in {\bf Problem 3}, we also get a polynomial-time algorithm in which the Breadth-First Search(BFS) is used. For a graph $G$ with $n$ vertices and $m$ edges, the BFS starts from a root vertex $x$ and explores all the neighbors of the vertices at the present level before moving to the next depth level. The algorithm stops when all vertices of $G$
have been explored. As is well known, the time complexity for BFS is $\mathcal{O}(n+m)$.

Before presenting our algorithm, we think it is necessary to give a definition.

\begin{df}
For a vertex $u$ in a connected graph $G$, it is obvious that any edge $e=st$ must have $|d_G(u,s)-d_G(u,t)|\leq1$. So $e$ is called a \emph{vertical edge} of $u$ if $|d_G(u,s)-d_G(u,t)|=1$ and a \emph{horizontal edge} of $u$ otherwise.
\end{df}

\begin{thm}\label{strong polynomial-time algorithm}
There exists a polynomial-time algorithm to determine Question $(e)$ in {\bf Problem 3}.
The complexity  is at most $\mathcal{O}(n^2m^2)$.
\end{thm}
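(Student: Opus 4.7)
The plan is to decide, for each pair of distinct vertices $(u,v)$, whether some shortest $u$-$v$-path in $G$ carries an edge whose color appears only once along that path. Since every conflict-free path must contain at least one uniquely colored edge, the pairwise decision reduces to: does there exist an edge $e=st$ of color $c(e)=i$ that lies on some $u$-$v$-path of length $d_G(u,v)$ using no other edge of color $i$?

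For each pair $(u,v)$ I would first run BFS from $u$ and from $v$ in $G$ to read off $d_G(u,v)$ together with the labels $d_G(u,\cdot)$ and $d_G(v,\cdot)$, which identify the vertical edges of $u$ and $v$. Then, for every edge $e=st$, form the auxiliary graph
\[
G_e := G - (E_{c(e)}\setminus\{e\})
\]
and run one BFS from $u$ and one from $v$ inside $G_e$; the edge $e$ will be declared a witness for $(u,v)$ iff
\[
\min\bigl(d_{G_e}(u,s)+1+d_{G_e}(t,v),\ d_{G_e}(u,t)+1+d_{G_e}(s,v)\bigr)=d_G(u,v).
\]
The algorithm returns that $G$ is strongly conflict-free connected exactly when every ordered pair admits such a witness.

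Correctness splits into two directions. If the equality holds for some $e$, say via the first term, then concatenating a $u$-$s$-shortest path in $G_e$, the edge $e$, and a $t$-$v$-shortest path in $G_e$ yields a $u$-$v$-walk of length $d_G(u,v)$ contained in $G_e$; any walk of that length is forced to be a simple path, since a repeated vertex would produce a strictly shorter shortcut, so we obtain a shortest $u$-$v$-path along which $c(e)$ appears exactly once. Conversely, if $P$ is any strongly conflict-free shortest $u$-$v$-path and $e=st\in P$ is an edge whose color is unique on $P$, then $P\subseteq G_e$ and its $u$-$s$- and $t$-$v$-subpaths are themselves shortest paths in $G_e$ of lengths $d_G(u,s)$ and $d_G(t,v)$, so their lengths plus one sum to $d_G(u,v)$ and realize the equality.

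For the complexity, the outer loop iterates over $\mathcal{O}(n^2)$ ordered pairs and, for each pair, over the $m$ candidate edges; each candidate triggers two BFS traversals of cost $\mathcal{O}(n+m)$ in $G_e$. Since $G$ is connected we have $n=\mathcal{O}(m)$, so the total cost collapses to $\mathcal{O}(n^2\cdot m\cdot(n+m))=\mathcal{O}(n^2m^2)$. The main implementation issue I expect is how to avoid rebuilding $G_e$ in each iteration: I would store the color classes $E_1,\ldots,E_k$ explicitly so that each BFS in $G_e$ simply skips every edge of $E_{c(e)}\setminus\{e\}$ on the fly, keeping each traversal linear in $n+m$.
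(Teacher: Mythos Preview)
Your proposal is correct and follows essentially the same approach as the paper: both iterate over pairs $(u,v)$ and candidate edges $e$, form $G_e=G-(E_{c(e)}\setminus\{e\})$, and use BFS to test whether a shortest $u$-$v$-path through $e$ survives in $G_e$. The only cosmetic differences are that the paper restricts attention up front to vertical edges $e=st$ with $d_G(u,s)<d_G(u,t)\le d_G(u,v)$ (your test automatically fails on the others, so this is just a pruning) and splits your single $\min$-equality into the two equivalent conditions $d_{G_e}(u,s)=d_G(u,s)$ and $d_{G_e}(v,t)=d_G(u,v)-d_G(u,t)$.
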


\begin{proof} Given $k\geq1$ and a connected graph $G$ with an
edge-coloring $c:E(G)\rightarrow\{1,2,\cdots,k\}$, let $E_i(i=1,2,\cdots,k)$ be the edge-set
containing all edges colored with $i$. We present our algorithm below:

\begin{algorithmic}
\State \rule[-0.1\baselineskip]{\textwidth}{1pt}
\textbf{Algorithm 2: Determining whether an edge-colored graph is strongly conflict-free connected}
\begin{spacing}{0.6}
\State \rule[0.6\baselineskip]{\textwidth}{1pt}
\end{spacing}
\noindent\textbf{Input:} A given integer $k\geq1$ and a connected graph $G$ with $n$ vertices, $m$ edges and
an edge-coloring $c: E(G)\rightarrow \{1,2,\cdots,k\}$.\\
\textbf{Output:} Whether $G$ is strongly conflict-free connected or not.\\
\textbf{1:} Check if there is an unselected pair of distinct vertices in $G$. If so,
pick one pair $(u,v)$, go to \textbf{2}; otherwise, go to \textbf{6}.\\
\textbf{2:} Set $i=0$, go to \textbf{3}.\\
\textbf{3:} Check if $i\leq k-1$. If so, $i:=i+1$, $G':=G-E_i$, go to \textbf{4}; otherwise, go to \textbf{7}.\\
\textbf{4:} For $(u,v)$, determine if there is an unselected vertical edge $e=st$ with $d_G(u,s)<d_G(u,t)\leq d_G(u,v)$ in $E_i$. If so, set $G'':=G'+e$, go to \textbf{5}; otherwise, go to \textbf{3}.\\
\textbf{5:} Check if $d_G(u,s)=d_{G''}(u,s)$ and $d_{G''}(v,t)=d_G(u,v)-d_G(u,t)$. If so, go to \textbf{1}; otherwise, go to \textbf{4}.\\
\textbf{6:} Return: $G$ is strongly conflict-free connected under coloring $c$.\\
\textbf{7:} Return: $G$ is not strongly conflict-free connected under coloring $c$.\\
\rule[0.7\baselineskip]{\textwidth}{1pt}
\end{algorithmic}

We will prove that the algorithm above is correct. If for a pair of distinct vertices $(u,v)$,
there is no conflict-free shortest path between them, then for any vertical edge $e=st$ with $d_G(u,s)<d_G(u,t)\leq d_G(u,v)$ in $G$, any $u\text{-}v$-path in $G-E_{c(e)}+e$ containing $e$ has length greater than $d_G(u,v)$. Hence there must be $d_G(u,s)\neq d_{G''}(u,s)$ or $d_{G''}(v,t)\neq d_G(u,v)-d_G(u,t)$ in step \textbf{5}. As a result, after traversing every vertical
edge $e=st$ with $d_G(u,s)<d_G(u,t)\leq d_G(u,v)$ in $G$, it comes to step \textbf{4}, then step \textbf{3} and finally step \textbf{7}
obtaining the right result that $G$ is not strongly conflict-free connected.

If for $(u,v)$, there is a conflict-free shortest path between them, then there must exist a vertical edge $e=st$ with $d_G(u,s)<d_G(u,t)\leq d_G(u,v)$ in $G$ such that we can obtain a $u\text{-}v$-path in $G-E_{c(e)}+e$ containing $e$ whose length is equal to $d_G(u,v)$. Then there must be $d_G(u,s)=d_{G''}(u,s)$ and $d_{G''}(v,t)=d_G(u,v)-d_G(u,t)$. Therefore, the running process will come to step \textbf{1} after step \textbf {5} and then examine
the next pair of vertices. If all pair of vertices have been examined, it will announce
that $G$ is strongly conflict-free connected. This shows the correctness of our algorithm.

For a fixed pair of vertices $(u,v)$, firstly we need to apply the BFS algorithm to $G$ designating $u$ as the root to acquire all vertical edge $e=st$ with $d_G(u,s)<d_G(u,t)\leq d_G(u,v)$ in $G$. Then for any fixed edge $e=st$, we only need to apply the BFS algorithm a few more times to $G'$ to examine step \textbf{5}. Consequently we
get that the complexity is $\mathcal{O}(n+m+m(n+m))=\mathcal{O}(m^2)$. Since there
are $\mathcal{O}(n^2)$ pair of vertices in $G$, the overall
complexity is at most $\mathcal{O}(n^2m^2)$.\end{proof}

If one wants to determine whether an edge-colored graph is $k$-subset strongly conflict-free connected, one only need to examine all pair of vertices in $P$ instead of in $V\times V$ in Algorithm 2. Then we immediately have the following theorem:

\begin{thm}\label{polynomial-time for k-subset}
There exists a polynomial-time algorithm to determine whether an edge-colored graph is $k$-subset strongly conflict-free connected.
\end{thm}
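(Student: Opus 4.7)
The plan is to obtain Theorem~2.8 as a direct specialization of Algorithm~2 from Theorem~\ref{strong polynomial-time algorithm}. The only modification required is in step \textbf{1}: instead of iterating over all unselected pairs in $V\times V$, the algorithm iterates over all unselected pairs in the input set $P$. The remaining steps \textbf{2}--\textbf{5} (the check, for a fixed pair $(u,v)$, that some vertical edge $e=st$ with $d_G(u,s)<d_G(u,t)\le d_G(u,v)$ witnesses a conflict-free shortest $u$-$v$-path) are copied verbatim. Steps \textbf{6} and \textbf{7} are adapted to output ``$G$ is $k$-subset strongly conflict-free connected with respect to $P$'' or ``not,'' respectively.

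For correctness, I would simply invoke the per-pair correctness that was already established in the proof of Theorem~\ref{strong polynomial-time algorithm}: for any individual pair $(u,v)$, the inner routine accepts if and only if there exists a conflict-free path between $u$ and $v$ whose length equals $d_G(u,v)$. Consequently, the modified algorithm accepts if and only if every pair in $P$ admits a conflict-free shortest path, which is exactly the definition of $k$-subset strong conflict-free connectivity under $c$. No new structural lemma is needed, since the reduction from ``conflict-free shortest path'' to the distance equalities $d_G(u,s)=d_{G''}(u,s)$ and $d_{G''}(v,t)=d_G(u,v)-d_G(u,t)$ is identical for each pair.

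For the complexity, I would reuse the per-pair bound of $\mathcal{O}(m^2)$ obtained in Theorem~\ref{strong polynomial-time algorithm} (one BFS from $u$ in $G$ to identify candidate vertical edges, then at most $\mathcal{O}(m)$ BFS calls in $G''$ to verify the distance equalities). Multiplying by the number of pairs examined gives a total running time of $\mathcal{O}(|P|\cdot m^2)$. Since $|P|\le n^2$, this is polynomial in the size of the input, and in fact no worse than the $\mathcal{O}(n^2m^2)$ bound of Algorithm~2. There is no real obstacle in this proof; the only thing to be careful about is to state the complexity in terms of $|P|$ so that the improvement over Theorem~\ref{strong polynomial-time algorithm} is visible when $|P|$ is small.
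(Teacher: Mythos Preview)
Your proposal is correct and follows precisely the paper's own approach: the paper states just before Theorem~\ref{polynomial-time for k-subset} that one need only examine the pairs in $P$ instead of in $V\times V$ in Algorithm~2, and then claims the theorem immediately. Your write-up merely spells out this observation more carefully and adds the refined $\mathcal{O}(|P|\cdot m^2)$ bound, which is consistent with (and slightly sharper than) what the paper implicitly uses.
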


\section{Hardness results on strong conflict-free connectivity problems}

When $k=2$, we prove that {\bf Problem 4} is NP-complete in subsection \ref{2-subset}; then when $k\geq3$, by showing that {\bf Problem 4} is still NP-complete, we derive the final result that {\bf Problem 1}(e) is NP-complete in subsection \ref{$k$-strong}.
\subsection{2-subset strong conflict-free connectivity problem}\label{2-subset}
Our main theorem is listed as below:

\begin{thm}\label{thm1}
 For $k=2$, {\bf Problem 4} is NP-complete.
\end{thm}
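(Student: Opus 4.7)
The plan is to prove Theorem \ref{thm1} by first confirming that the problem lies in NP and then establishing NP-hardness via a polynomial-time reduction from a well-known NP-complete satisfiability problem.

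For NP membership, given any candidate $2$-edge-coloring $c$ as a certificate, Theorem \ref{polynomial-time for k-subset} lets us verify in polynomial time whether every pair in $P$ is strongly conflict-free connected under $c$. Hence \textbf{Problem 4} is in NP, and it remains to prove NP-hardness.

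For hardness, I would reduce from $3$-SAT (or, if gadget design turns out cleaner, from NAE-$3$-SAT). The exploitable feature of the $k=2$ case is the following rigidity: if $(u,v)\in P$ has a unique shortest $u$-$v$-path of length $2$ in $G$, say $u\text{-}w\text{-}v$, then being strongly conflict-free connected forces the two incident edges to receive different colors (some color must appear exactly once on the path). I plan to use this gadgetry to encode Boolean variables: for each variable $x_i$ I introduce a dedicated edge $e_i$ whose color in $\{1,2\}$ represents its truth value, together with a short auxiliary construction (a length-$2$ path and a pair in $P$) that creates a ``negation edge'' $\bar{e}_i$ forced to take the opposite color. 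Consistency across different occurrences of the same literal will be enforced by chaining such length-$2$ constraints so that all copies of a literal are locked to a single color.

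The main creative step, and the one I expect to be the principal obstacle, is the clause gadget. Given a clause $C_j=\ell_{j,1}\lor\ell_{j,2}\lor\ell_{j,3}$, I need a subgraph together with a pair $(u_j,v_j)\in P$ such that a strongly conflict-free shortest $u_j$-$v_j$-path exists under a coloring $c$ if and only if at least one of the edges representing $\ell_{j,1},\ell_{j,2},\ell_{j,3}$ carries the colour corresponding to TRUE. The twin requirements of the strong variant, that the path be both conflict-free and of length exactly $d_G(u_j,v_j)$, make the design delicate: I must arrange that every shortest $u_j$-$v_j$-path passes through exactly one of the three literal edges (so their colours control conflict-freeness), while still preventing any shortcut or alternative shortest path that could create a conflict-free route independently of the literal values. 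I expect to achieve this by attaching short ``detour'' paths whose coloring is forced by additional pairs in $P$ to be monochromatic, so that a shortest path through a literal edge becomes conflict-free only when that literal is TRUE. Verifying that the forced colorings of all auxiliary edges do not accidentally satisfy an unsatisfied clause is the subtle soundness check.

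Once the variable, negation, consistency, and clause gadgets are in place, I assemble them into a single graph $G$ with a union set of constraint pairs $P$, where both $|V(G)|$ and $|P|$ are polynomial in $n+m$. The forward direction is routine: from a satisfying assignment, color each $e_i$ by TRUE/FALSE and verify gadget-by-gadget that every pair in $P$ has a conflict-free shortest path. The reverse direction requires the soundness analysis outlined above: a valid $2$-coloring is shown to induce a well-defined truth assignment via the variable and consistency gadgets, and the clause gadgets then certify that every clause is satisfied. Combined with the NP certificate, this yields NP-completeness for $k=2$.
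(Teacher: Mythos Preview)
Your NP-membership argument is correct and matches the paper's. The hardness outline, however, stops precisely at the point that carries all the weight: you identify the clause gadget as ``the principal obstacle'' and then describe only in broad strokes (``I expect to achieve this by attaching short detour paths\ldots'') what it should do, without actually constructing it or verifying soundness. In particular, your basic tool---a unique shortest path of length~$2$ forcing its two edges to receive \emph{different} colours---yields inequality constraints, and chaining these gives equalities; but to make a clause gadget you also need to encode the \emph{sign} of each literal, i.e.\ to tie specific edges to a fixed ``TRUE'' colour rather than merely to each other. Your sketch does not explain how a global colour reference is established, nor how the three literal-paths are made to share the exact same length without introducing unintended shortest paths. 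As written, this is a plan rather than a proof.

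The paper sidesteps exactly this difficulty by inserting an intermediate problem. It first proves that the \emph{partial $2$-edge-colouring extension problem} (Problem~5: given a graph with some edges pre-coloured, decide whether the colouring extends to a strong conflict-free colouring) reduces to Problem~4, by building alternating paths whose pairs in $P$ force every pre-coloured edge to agree with a single reference edge $b_1c$. It then reduces $3$-SAT to Problem~5: with pre-colouring available, the clause gadget becomes almost trivial---each clause vertex $c_j$ is joined to its variable vertices $x_i$ by edges whose colour is fixed in advance according to the sign of the literal, and the only free edges are $x_ia$; the unique shortest $a$--$c_j$ paths are the length-$2$ paths $a\text{-}x_i\text{-}c_j$, and one of them is conflict-free iff some literal of $c_j$ is satisfied. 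The two-step decomposition is what turns your ``principal obstacle'' into a one-line construction, and it is the idea your proposal is missing.
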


We first define the following problem.

\begin{prob}[Partial 2-edge-coloring problem]
Given a graph $G=(V,E)$ and a partial 2-edge-coloring $\hat{c}:$ $\hat{E}\rightarrow \{0,1\}$ for $\hat{E}\subset E$, decide whether $\hat{c}$ can be extended to a complete 2-edge-coloring $c:$ $E\rightarrow \{0,1\}$ that makes $G$ strongly conflict-free connected.
\end{prob}

When $k=2$, we first reduce {\bf Problem 5} to {\bf Problem 4}, and then reduce 3-SAT to {\bf Problem 5}, finally Theorem \ref{thm1} is completed since Theorem \ref{polynomial-time for k-subset} implies that {\bf Problem 4} belongs to NP.

\begin{lem}\label{lem2}
For $k=2$, {\bf Problem 5}$\preceq$ {\bf Problem 4}.
\end{lem}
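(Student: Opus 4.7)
The plan is to exhibit a polynomial-time many-one reduction from \textbf{Problem 5} to \textbf{Problem 4}. Given an instance $(G,\hat{c})$ of \textbf{Problem 5} with $\hat{c}:\hat{E}\rightarrow\{0,1\}$ a partial 2-edge-coloring of $\hat{E}\subseteq E(G)$, I would construct an auxiliary graph $G'$ together with a pair set $P\subseteq V(G')\times V(G')$ as follows. Start from $G'=G$, and for each pre-colored edge $e\in\hat{E}$ attach a small ``color-enforcement'' gadget $H_e$ sharing only the endpoints of $e$ with $G$. The set $P$ consists of all pairs $(u,v)\in V(G)\times V(G)$, which encodes the requirement that $G$ itself be strongly conflict-free connected, together with a constant number of auxiliary pairs inside each $H_e$ meant to encode the constraint $\hat{c}(e)=c_0$.

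The crux of the argument lies in the gadget $H_e$. Its purpose is to guarantee that in any 2-edge-coloring of $G'$ making the specified pairs strongly conflict-free connected, the edge $e$ receives precisely the prescribed color $\hat{c}(e)$. I would exploit the very restrictive behaviour of conflict-free colorings on short shortest paths with only two available colors: a shortest path of length $2$ is conflict-free iff its two edges get distinct colors, and a shortest path of length $4$ is conflict-free iff exactly three of its four edges share a common color. Chaining several such constrained paths through $e$ and routing them through a common anchor vertex lets me force the color of $e$ relative to a canonical anchor edge shared by all gadgets. Because swapping the two colors globally preserves strong conflict-free connection, the anchor's absolute color is immaterial and may be identified with $0$, so the relative constraints become the intended absolute ones.

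The correctness argument splits in two directions. In the forward direction, given a valid extension $c$ of $\hat{c}$ that makes $G$ strongly conflict-free connected, I extend $c$ to each $H_e$ in a prescribed canonical fashion to obtain a 2-coloring of $G'$ witnessing $(G',P)$ as a YES-instance. In the backward direction, given any 2-edge-coloring $c'$ of $G'$ satisfying all pairs in $P$, the gadget constraints force $c'|_{E(G)}$ (possibly after a global color swap fixed by the anchor) to extend $\hat{c}$; since $V(G)\times V(G)\subseteq P$, this restriction also witnesses that $G$ is strongly conflict-free connected. The reduction runs in polynomial time because each gadget has constant size and $|\hat{E}|\leq|E(G)|$.

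The main obstacle will be the gadget construction itself: $H_e$ must be small enough to keep the reduction polynomial, rigid enough to unambiguously force $c(e)=\hat{c}(e)$, and ``neutral'' in the sense that the remainder of $H_e$ admits a compatible coloring for any pre-chosen color of $e$, so as not to block legitimate extensions. In addition I must verify that distinct gadgets do not interact, which reduces to checking that each $H_e$ touches $G$ only at the endpoints of $e$ and that the auxiliary pairs in $P$ are realized by paths lying inside $H_e$. Once these local verifications are carried out by direct case analysis on the color distributions of the relevant length-$2$ and length-$4$ shortest paths, the reduction and Lemma~\ref{lem2} follow.
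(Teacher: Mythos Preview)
Your outline captures the right high-level strategy, but it contains a concrete inconsistency that would break the backward direction. You want each $H_e$ to have \emph{constant} size and also to route through a \emph{common} anchor so that the colors of all pre-colored edges are tied to one reference. But if an anchor vertex $a$ is shared by all gadgets and every $H_e$ has bounded diameter, then $a$ lies at bounded distance from every endpoint of every pre-colored edge; for two such endpoints $u,v$ with $d_G(u,v)$ large, $G'$ then contains a $u$--$v$ path of bounded length through $a$. Hence shortest $u$--$v$ paths in $G'$ need not stay inside $G$, and satisfying the pair $(u,v)\in P$ in $G'$ says nothing about shortest $u$--$v$ paths in $G$; the restriction $c'|_{E(G)}$ is no longer forced to make $G$ strongly conflict-free connected. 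Your ``non-interaction'' check (that $H_e$ meets $G$ only in the endpoints of $e$) does not rule this out, because the shortcut runs through the shared anchor, not through $G$.

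The paper resolves exactly this tension by making the gadgets long rather than constant-size. It uses two common anchor vertices $b_1,b_2$ (joined through a vertex $c$, so the pair $\{b_1,b_2\}$ forces $c(b_1c)\neq c(b_2c)$), and for each $e\in\hat E_i$ a path of length $r+1$ with $r\approx n/2$ from $b_i$ to a new vertex $c_e$, plus a single edge $c_e\varepsilon(e)$ into one endpoint of $e$. Distance-$2$ pairs along this path force strict alternation, pinning the color of $c_e\varepsilon(e)$ relative to $b_ic$; the final pair $\{c_e,\theta(e)\}$ has unique shortest path $c_e\text{--}\varepsilon(e)\text{--}\theta(e)$, which forces $c(e)\neq c(c_e\varepsilon(e))$ and hence the intended color on $e$. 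The choice $r\approx n/2$ is precisely what guarantees that any walk between two vertices of $G$ through the anchor has length at least $2r+4\geq n>\operatorname{diam}(G)$, so shortest paths between $V(G)$-vertices in $G'$ coincide with those in $G$. Two smaller points: attaching at both endpoints of $e$ is unnecessary (the paper attaches at one endpoint and uses $e$ itself as the second edge of a length-$2$ constraint), and your length-$4$ observation only says some edge is the odd one out without specifying which, so it cannot by itself propagate a specific color---the paper relies solely on distance-$2$ pairs.
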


\begin{proof} Given such a partial coloring $\hat{c}$ for $\hat{E}\subset E$, we denote $\hat{E}=\hat{E_1}\cup \hat{E_2}$ where $\hat{E_1}$ contains all edges in $\hat{E}$ colored with 0 and $\hat{E_2}=\hat{E}\setminus \hat{E_1}$. We then extend the original graph $G=(V,E)$ to a graph $G'=(V',E')$, and define a set $P$ of pairs of vertices of $V'$ such that the answer for {\bf Problem 5} with $G$ and $\hat{c}$ as parameters is yes if and only if the answer for {\bf Problem 4} with $G'$ and $P$ as parameters is yes.

Let $[n]$($n=|V|$) be an arbitrary linear ordering of the vertices and $l(v)(v\in V)$ be the number related to $v$ in this ordering. Let $\theta:$ $E\rightarrow V$ be a mapping that maps an edge $e=uv$ to $u$ if $l(u)>l(v)$, and to $v$ otherwise. On the contrary, let $\varepsilon:$ $E\rightarrow V$ be a mapping that maps $e=uv$ to $u$ if $l(u)< l(v)$, and to $v$ otherwise. Let $r=\lceil\frac{n}{2}\rceil$ if $\lceil\frac{n}{2}\rceil$ is odd, otherwise $r=\lceil\frac{n}{2}\rceil+1$. We polynomially construct $G'$ as follows:
its vertex set
\begin{center}
$V'=V\cup V_1\cup V_2\cup V_3$ where\\
$V_1=\{b_1,c,b_2\}$ \\
$V_2=\{c_e:for\ \forall e\in (\hat{E}_1\cup \hat{E}_2)\}$\\
$V_3=\{t^e_1,t^e_2,\cdots,t^e_{r}: for\ \forall e\in (\hat{E}_1\cup\hat{E}_2)\}$
\end{center}
and edge set
\begin{center}
$E'=E\cup E_1\cup E_2\cup E_3$ where\\
$E_1=\{b_1c,b_2c\}$\\
$E_2=\{b_it^e_1, t^e_1t^e_2,\cdots, t^e_{r-1}t^e_r, t^e_rc_e: i\in\{1,2\}, e\in\hat{E}_i\}$\\
$E_3=\{c_e\varepsilon(e):e\in(\hat{E}_1\cup\hat{E}_2)\}$\\
\end{center}

Now we define the set $P$ of pairs of vertices of $V'$:
\begin{center}
$P=\{b_1,b_2\}\cup\{\{u,v\}:u,v\in V, u\neq v\}\cup\{\{c,t^e_1\},\{b_i,t^e_2\},\{t^e_1,t^e_3\},\{t^e_2,t^e_4\},\cdots,\{t^e_{r-2}t^e_r\},\{t^e_{r-1},c_e\},\{t^e_r,\varepsilon(e)\}:i\in \{1,2\},e\in\hat{E}_i\}\cup\{\{c_e,\theta(e)\}:e\in (\hat{E}_1\cup\hat{E}_2)\}$
\end{center}

Now, if there is a strong conflict-free coloring with 2 colors $\pi_c=(E_1,E_2)$ of $G$ which extends $\pi_{\hat{c}}=(\hat{E}_1,\hat{E}_2)$, then we color $G'$ as follows. Every edge $e\in E$ retains coloring $c$: the edge is colored with 0 if it is in $E_1$ and otherwise it is colored with 1. Edges $b_1c, \varepsilon(e)c_e$ for $e\in \hat{E}_2$ are all colored with 0, $b_2c$ and $c_e\varepsilon(e)$ for $e\in \hat{E}_1$ are all colored with 1. Moreover, edges $b_1t^e_1, t^e_1t^e_2, \cdots, t^e_{r-1}t^e_r, t^e_rc_e$ $(e\in \hat{E}_1)$ are assigned the color 1 and 0 alternately and edges $b_2t^e_1, t^e_1t^e_2, \cdots, t^e_{r-1}t^e_r, t^e_rc_e$ $(e\in \hat{E}_2)$ are assigned the color 0 and 1 alternately. One can see that this coloring indeed makes each pair in $P$ strongly conflict-free connected.

On the other direction, we can see that $P$ contains all vertex pairs of $G$ and for each of these pairs, all the shortest paths between it in $G'$ are completely contained in $G$. Thus any 2-edge-coloring of $G'$ that strongly conflict-free connects the pairs in $P$ clearly contains a strong conflict-free coloring of $G$. Also, such a coloring would have to color $cb_1$ and $cb_2$ differently. It would also have to color every $b_1t^e_1(e\in \hat{E}_1)(b_2t^e_1(e\in \hat{E}_2))$ in a color different from that of $cb_1(cb_2)$. By further reasoning, we can see that the colorings of $b_1t^e_1, t^e_1t^e_2, \cdots, t^e_{r-1}t^e_r, t^e_rc_e$ $(e\in \hat{E}_1)$ and $b_2t^e_1, t^e_1t^e_2, \cdots, t^e_{r-1}t^e_r, t^e_rc_e$ $(e\in \hat{E}_2)$ are both alternately. As a result, $c_e\varepsilon(e)(e\in \hat{E}_1)$ must be in a color different from that of $cb_1$ and $c_e\varepsilon(e)(e\in \hat{E}_2)$ is in a color different from that of $cb_2$. Finally, every $e\in \hat{E}_i$ must be assigned the color identical to that of $cb_i$ to make $\theta(e)$ and $c_e$ strongly conflict-free connected. Without loss of generality, we suppose that the edge $cb_1$ is colored with 0. It is clear that this coloring of $G'$ conforms to the original partial coloring $\hat{c}$. This implies that $\hat{c}$ can be extended to a complete 2-edge-coloring $c:$ $E\rightarrow \{0,1\}$ that makes $G$ strongly conflict-free connected.
\end{proof}

\begin{lem}\label{lem3}
3-SAT$\preceq$ {\bf Problem 5}.
\end{lem}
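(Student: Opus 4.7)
The plan is to give a polynomial-time reduction from 3-SAT. Given a 3-CNF formula $\phi$ on variables $x_1,\ldots,x_n$ with clauses $C_1,\ldots,C_m$, I will construct a graph $G_\phi$ with a partial 2-edge-coloring $\hat c$ whose uncolored edges are precisely $n$ \emph{variable edges} $e_1,\ldots,e_n$; the eventual color $c(e_i)\in\{0,1\}$ is to encode the truth value of $x_i$ (say $1$ for true, $0$ for false). The goal is that $\hat c$ extends to a strong conflict-free 2-coloring of $G_\phi$ if and only if $\phi$ is satisfiable.

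The construction revolves around a clause gadget $H_j$ for each clause $C_j=\ell_{j,1}\vee\ell_{j,2}\vee\ell_{j,3}$. I would introduce two new vertices $s_j,t_j$ and three internally disjoint $s_j$-$t_j$ paths $R_{j,1},R_{j,2},R_{j,3}$ of common length $4$, where $R_{j,k}$ contains the variable edge $e_{i(j,k)}$ associated with $\ell_{j,k}$; the remaining three edges of $R_{j,k}$ are precolored using one edge of one color and two of the other, chosen so that $R_{j,k}$ is conflict-free if and only if $c(e_{i(j,k)})$ is the color encoding \emph{true} for $\ell_{j,k}$ (namely $1$ for a positive literal and $0$ for a negative one). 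A direct count confirms that length $4$ suffices: if a literal is satisfied then exactly one color appears on its route once, and otherwise each color appears twice. All the $s_j, t_j$ and variable-edge endpoints are then joined by a backbone of alternately precolored paths, arranged so that $d_{G_\phi}(s_j,t_j)=4$ for every $j$ and so that every shortest path between any other pair of vertices is already conflict-free regardless of the extension. Under these arrangements, $\hat c$ extends if and only if each $H_j$ admits at least one conflict-free shortest $s_j$-$t_j$ route, which is exactly the condition that $\phi$ has a satisfying assignment. The size of $G_\phi$ is clearly polynomial in $|\phi|$.

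The main obstacle I expect is making the shortest-path rigidity of strong conflict-free connectivity bite exactly at the clause pairs. The clause gadgets only work if $d_{G_\phi}(s_j,t_j)$ genuinely equals $4$, so I must ensure that no alternative path through the backbone, through a variable attachment, or through another clause gadget $H_{j'}$ offers a strictly shorter $s_j$-$t_j$ route and thereby bypasses the clause constraint. I will enforce this by padding the backbone with sufficiently long precolored buffer paths so that every candidate ``detour'' is strictly longer than $4$, while simultaneously checking that no such buffer accidentally shortens the distance between some other pair of vertices or destroys the already-established conflict-freeness on those pairs. The delicacy is compounded by the fact that a variable occurring in many clauses contributes the same edge $e_i$ to several $H_j$'s, so a single color choice must consistently control conflict-freeness across all of them; this is precisely the translation of Boolean semantics into the graph. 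Once all these distance and precoloring invariants are verified, the equivalence between satisfiability and extendibility follows, completing the reduction.
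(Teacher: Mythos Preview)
Your plan has a structural gap that buffering cannot patch. You require a single physical edge $e_i$ to lie on every clause gadget $H_j$ in which variable $x_i$ occurs. The endpoints of $e_i$ then sit simultaneously on all those length-$4$ routes, so whenever $x_i$ occurs in clauses $C_j$ and $C_{j'}$ the terminals $s_j,t_j,s_{j'},t_{j'}$ are all within distance at most $3$ of an endpoint of $e_i$ \emph{through the gadgets themselves}, not through your backbone; padding the backbone does nothing to lengthen such routes. You therefore acquire many short ``hybrid'' paths --- for instance an $s_j$--$t_{j'}$ path of length $\le 4$ obtained by gluing a prefix of $R_{j,k}$ to a suffix of $R_{j',k'}$ at $e_i$ --- and since Problem~5 requires strong conflict-freeness for \emph{every} pair, each such hybrid shortest path must be conflict-free under every extension, or else extendibility becomes strictly stronger than satisfiability. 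But the colour pattern along a hybrid path mixes the precolourings of two unrelated clauses with $c(e_i)$: a pattern like $0,1,c(e_i),0$ forces $c(e_i)=0$ regardless of $\phi$, and a few such constraints from different clause pairs can easily be inconsistent even when $\phi$ is satisfiable. Your sketch asserts that ``every shortest path between any other pair of vertices is already conflict-free regardless of the extension'', yet offers no mechanism for this, and with shared variable edges the assertion cannot be rescued by backbone padding alone.

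The paper avoids the whole difficulty with a much flatter construction. Variables are \emph{vertices} $x_i$ rather than edges; a single apex $a$ is joined to every $x_i$; clause vertices $c_j$ are joined to the $x_i$ they contain; and cliques precoloured $0$ are placed on $\{x_i\}$ and on $\{c_j\}$. Assuming (harmlessly) that every variable occurs with both signs, every pair of vertices other than $(a,c_j)$ already has a conflict-free shortest path lying entirely in the precoloured part, so the only live pairs are $(a,c_j)$, at distance $2$, whose shortest paths are exactly $a\text{--}x_i\text{--}c_j$ for $x_i\in c_j$. Precolouring $x_ic_j$ by the sign of the literal and leaving only the $n$ edges $ax_i$ uncoloured turns an extension into a truth assignment, with $(a,c_j)$ strongly conflict-free connected iff $C_j$ is satisfied. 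No padding, no shared-edge geometry, no hybrid-path analysis.
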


\begin{proof}
Let $\phi:=\bigwedge^l_{i=1}c_i$ be a 3-conjunctive normal form formula over variables \{$x_1,x_2,\cdots,x_n$\}. Then we polynomially construct the graph $G'=(V',E')$ as follows:

\begin{center}
$V'=\{c_i:i\in [l]\}\cup\{x_i:i\in[n]\}\cup\{a\}$\\
$E'=\{x_ic_j:x_i\in c_j\}\cup\{x_ia:i\in[n]\}\cup\{c_ic_j:i,j\in[l]\}\cup\{x_ix_j:i,j\in[n]\}$
\end{center}

Now we give the partial 2-edge-coloring $c'$: edges $\{c_ic_j:i,j\in [l]\}$ and $\{x_ix_j:i,j\in [n]\}$ are assigned the color 0; the edge $x_ic_j\in E'$ is assigned the color 0 if $x_i$ is positive in $c_j$ and color 1 otherwise. Thus only the edges in $\{x_ia:i\in[n]\}$ are left uncolored.

Without loss of generality, we assume that all variables in $\phi$ appear both as positive and as negative, so it only remains to prove that there is an extension $c$ of $c'$ that enables a conflict-free shortest path between $a$ and each $c_i(i\in [l])$ if and only if $\phi$ is satisfiable since there will always be a conflict-free shortest path between any other pair of vertices of $V'$ whatever the extension is. Let $c(x_ia)=x_i(i\in[n])$, one can verify that this relationship does hold. In fact, in a successful extension $c$ of $c'$, the color vector formed by $c(x_ia)(i\in[n])$ can be seen as a solution vector of $\phi$, and vice versa.\end{proof}

\subsection{$k$-strong conflict-free connectivity problem}\label{$k$-strong}

The following is our main theorem.

\begin{thm}\label{thm2}
For $k\geq2$, {\bf Problem 1} $($e$)$ is NP-complete.
\end{thm}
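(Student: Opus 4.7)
First, I would verify that Problem 1(e) belongs to NP: Algorithm 2 from Theorem~\ref{strong polynomial-time algorithm} checks in time $\mathcal{O}(n^2 m^2)$ whether a given $k$-edge-coloring makes $G$ strongly conflict-free connected, so an explicit coloring serves as a polynomial-size certificate. The rest of the proof is devoted to NP-hardness, and the plan naturally splits into two steps, exactly as foreshadowed at the start of Section~3.

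The first step is to promote Theorem~\ref{thm1} from $k=2$ to every $k\geq 3$, i.e., to show that Problem~4 remains NP-complete for all $k\geq 2$. I would reuse the graph and pair set produced by the compositions in Lemmas~\ref{lem2} and \ref{lem3} and attach to the resulting graph $G'$ a color-consuming pendant gadget: a sufficiently long pendant path (of length roughly $2^{k-2}$), together with the standard sequence of endpoint and interior pairs added to $P$. On a pendant path the only shortest path between any two of its vertices is the subpath itself, so the strong conflict-free constraints there coincide with the ordinary conflict-free ones, and by the equality $cfc(P_n)=\lceil\log_2 n\rceil$ stated earlier, at least $k-2$ fresh colors are forced, disjoint from the two colors already used to encode 3-SAT on $G'$. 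Since the pendant is attached to $G'$ at a single vertex, it does not interfere with the 3-SAT encoding, and satisfiability of $\phi$ remains equivalent to the existence of a valid $k$-coloring.

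The second step derives NP-hardness of Problem 1(e) from this strengthened version of Problem~4. The key observation is that the graph $G'$ above can be set up so that every pair in $V(G')\times V(G')\setminus P$ is automatically strongly conflict-free connected under any coloring that already handles $P$: each such pair is either adjacent (hence trivially covered) or shares a shortest path with some pair in $P$, from which a uniquely-colored edge on that shared path can be inherited. Verifying this reduces to a finite case analysis over the auxiliary vertices $b_1, b_2, c, t^e_i, c_e$ of Lemma~\ref{lem2}, the variable/clause vertices $a, x_i, c_j$ of Lemma~\ref{lem3}, and the pendant-gadget vertices added in the previous step. Once this verification is complete, Problem~4 on $(G', P, k)$ coincides with Problem 1(e) on $(G', k)$, yielding a polynomial reduction from 3-SAT directly to Problem 1(e).

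The main obstacle will be precisely this case analysis: showing that no pair outside $P$ imposes a color constraint not already forced by $P$. A clean treatment may require slightly enlarging $P$ with a handful of routine additional pairs between the pendant-gadget vertices and the variable/clause vertices, or, alternatively, extending the construction with a uniform ``trivializer'' subgraph attached at each vertex to supply a short, pre-colored alternative shortest path for every remaining pair, combined with a careful distance-preservation argument showing that no new shortcuts are introduced among pairs in $P$. Either route is conceptually direct but may be tedious to carry out in full detail.
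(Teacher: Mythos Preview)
Your plan diverges substantially from the paper's proof, and the central claim of your second step is false as stated. Take two edges $e,e'\in\hat E_1$ in the graph $G'$ built in Lemma~\ref{lem2}. The unique shortest path between $t^e_2$ and $t^{e'}_2$ is $t^e_2\,t^e_1\,b_1\,t^{e'}_1\,t^{e'}_2$, of length~$4$. The pairs you put into $P$ force $cb_1$ and $b_1t^e_1$ to differ, and force each chain $b_1t^e_1,t^e_1t^e_2,\ldots$ to alternate; with only two colors this pins $b_1t^e_1$ and $b_1t^{e'}_1$ to the \emph{same} color, so the path above is colored $0,1,1,0$ and is not conflict-free. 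Thus $(t^e_2,t^{e'}_2)\notin P$ is \emph{not} automatically strongly conflict-free connected under any coloring that handles $P$, and Problem~4 on $(G',P,2)$ does not coincide with Problem~1(e) on $(G',2)$. Your fallback of a ``trivializer'' subgraph would have to add a length-$\le 4$ alternative path for every such pair without shortening any $P$-pair's distance; this is precisely the hard combinatorial design you were hoping to avoid, and you give no construction.

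The paper sidesteps this entirely by treating the two ranges of $k$ with different, much lighter reductions. For $k=2$ it observes (Lemma~\ref{new}) that $rc(G)=2$ iff $diam(G)=2$ and $scfc(G)=2$, so NP-hardness is inherited directly from the known hardness of deciding $rc(G)=2$. For $k\ge 3$ it abandons the 3-SAT gadget altogether and instead reduces from $k$-vertex-coloring: the Problem~4 instance is simply a \emph{star} with $P$ equal to the edge set of the input graph (Lemma~\ref{conflict-free path}), and then an explicit, small bipartite extension of that star is built so that \emph{every} remaining pair acquires a conflict-free shortest path under a fixed coloring of the new edges. Because the base graph is a star, the case analysis you anticipated collapses to a handful of lines. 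Your pendant-path idea for manufacturing extra colors is reasonable in spirit, but the paper never needs it: the parameter $k$ enters through the $k$-coloring source problem rather than through a color-forcing gadget.
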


In the following we prove Theorem \ref{thm2} for $k=2$ and for $k\geq3$, separately.

At first let us deal with the case $k=2$. Chakraborty et al. in \cite{CFMY} obtained the following result.

\begin{thm}\upshape\cite{CFMY}\label{cfmy}
Given a graph $G$, deciding if $rc(G) = 2$ is NP-complete. In particular,
computing $rc(G)$ is NP-hard.
\end{thm}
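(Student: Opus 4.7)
The plan is to establish Theorem \ref{cfmy} in two stages: (1) membership in NP, and (2) NP-hardness via reduction from 3-SAT, given that computing $rc(G)$ is NP-hard as soon as recognizing $rc(G)=2$ is intractable (for if $G$ has any edges at all and is not complete, then $rc(G)\ge 2$, so the decision ``$rc(G)\le 2$?'' is equivalent to ``$rc(G)=2$?'').

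For NP membership, a $2$-edge coloring $c:E(G)\to\{1,2\}$ serves as a polynomial-size certificate. To verify that $c$ rainbow-connects $G$ under two colors, I would first check that $\mathrm{diam}(G)\le 2$, since any rainbow path under two colors has at most two edges. Then, for every non-adjacent pair $(u,v)$, I check whether some common neighbor $w$ satisfies $c(uw)\neq c(wv)$. Both tasks run in time polynomial in $|V|+|E|$, so the problem lies in NP.

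For NP-hardness I would reduce from 3-SAT. Given $\phi=C_1\wedge\cdots\wedge C_m$ over variables $x_1,\ldots,x_n$, I construct in polynomial time a graph $G_\phi$ satisfying $rc(G_\phi)\le 2$ iff $\phi$ is satisfiable. The construction introduces, for each variable $x_i$, a \emph{literal gadget} whose $2$-edge coloring encodes the truth value of $x_i$, together with auxiliary edges forcing the two literal edges $x_i$ and $\overline{x}_i$ to receive opposite colors (so that any valid coloring encodes a consistent Boolean assignment). For each clause $C_j$, I attach a \emph{clause gadget} consisting of a distinguished pair $(s_j,t_j)$ at distance exactly $2$ such that the length-$2$ paths between $s_j$ and $t_j$ are in one-to-one correspondence with the three literals appearing in $C_j$; a bichromatic (hence rainbow) path of length $2$ between $s_j$ and $t_j$ exists iff at least one literal of $C_j$ is set to true.

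To satisfy the necessary global condition $\mathrm{diam}(G_\phi)\le 2$, I would further add a universal apex vertex $a$ adjacent to all literal and clause vertices, and pad the construction with degree-$2$ filter vertices that both raise the global connectivity and prevent unintended short-cuts between clause endpoints. The main obstacle is precisely this tension: the gadgetry must make $G_\phi$ dense enough to achieve diameter $2$ everywhere, yet sparse enough at each clause pair $(s_j,t_j)$ so that the only length-$2$ paths between them traverse a literal edge of $C_j$. Resolving this — by placing apex, literal and clause vertices into carefully chosen independent sets, pre-committing the colors of certain ``structural'' edges, and ruling out bichromatic apex paths by coloring forced $a$-incident edges monochromatically — is the crux of the reduction. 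Once the gadgets are in place, a direct case analysis establishes the bijection between $2$-edge colorings witnessing $rc(G_\phi)\le 2$ and satisfying assignments of $\phi$, completing the proof.
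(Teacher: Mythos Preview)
The theorem you are attempting to prove is not proved in this paper at all; it is quoted verbatim from \cite{CFMY} (Chakraborty, Fischer, Matsliah, Yuster) and used as a black box in the proof of Theorem~\ref{problem1}. There is therefore no ``paper's own proof'' to compare your proposal against --- the authors simply cite the result.

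As for the proposal itself: it is a plan, not a proof. The NP-membership part is fine, and your high-level description of the hardness reduction (3-SAT, literal gadgets forcing opposite colors, clause gadgets whose only length-$2$ paths pass through literal edges, an apex to keep the diameter at $2$) is indeed the shape of the argument in \cite{CFMY}. But the part you yourself flag as ``the crux of the reduction'' --- the concrete gadget that simultaneously achieves $\mathrm{diam}(G_\phi)=2$ while leaving each clause pair $(s_j,t_j)$ with \emph{only} the three intended length-$2$ paths --- is never actually given. Phrases like ``pad the construction with degree-$2$ filter vertices'' and ``placing apex, literal and clause vertices into carefully chosen independent sets'' describe intentions, not a construction one could verify. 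Without an explicit graph and a case check of (i) diameter~$2$, (ii) the forced bichromaticity of each literal pair, and (iii) the absence of unintended $s_j$--$t_j$ shortcuts, the reduction is not established. If you intend to reproduce the result, those details must be supplied; if you intend merely to use it, a citation to \cite{CFMY} is all the present paper requires.
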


Then we can easily get the following result by the definitions of rainbow connection and conflict-free connection.

\begin{lem}\label{new}
Given a graph $G=(V,E)$, $rc(G)=2$ if and only if $diam(G)=2$ and $scfc(G)=2$.
\end{lem}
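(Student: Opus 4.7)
The plan is to exploit the fact that a rainbow path in a 2-edge-coloring can use each color at most once, so it has length at most $2$. This makes the two notions essentially collapse when the diameter is small.

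For the forward direction, suppose $rc(G)=2$ witnessed by a 2-edge-coloring $c$. First I would observe that any rainbow path under $c$ has length at most $2$, so every pair of distinct vertices is joined by a path of length at most $2$, giving $diam(G)\le 2$. Since $rc(G)=2>1$, $G$ is not complete, so $diam(G)\ge 2$ and hence $diam(G)=2$. Next I would argue that the same coloring $c$ witnesses $scfc(G)\le 2$: for any pair $(u,v)$, the rainbow $u$-$v$-path under $c$ has length at most $2$, which equals $d_G(u,v)$ because $diam(G)=2$, and any rainbow path of length $\le 2$ is trivially conflict-free (each of its $\le 2$ colors occurs exactly once). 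Combined with $scfc(G)\ge 2$ (which follows from $G$ being noncomplete, hence lacking a single-edge path between some adjacent-pair search), we get $scfc(G)=2$.

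For the reverse direction, suppose $diam(G)=2$ and $scfc(G)=2$, witnessed by a 2-edge-coloring $c'$ that supplies a conflict-free shortest $u$-$v$-path for every pair $(u,v)$. I would check that $c'$ is in fact a rainbow 2-coloring: a shortest path has length $d_G(u,v)\le 2$; if its length is $1$ then it is trivially rainbow, and if its length is $2$ then being conflict-free with only two available colors forces its two edges to have different colors, hence it is rainbow. This yields $rc(G)\le 2$, and $rc(G)\ge 2$ follows because $diam(G)=2$ means $G$ is noncomplete, so $rc(G)\neq 1$.

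I do not expect a serious obstacle; the only nuance is being careful to handle the length-$1$ case (adjacent pairs) when arguing that conflict-free shortest paths are rainbow, and to invoke noncompleteness to get the lower bounds $rc(G)\ge 2$ and $scfc(G)\ge 2$. Everything else is a direct unwinding of the three definitions once one notes the length-$\le 2$ constraint imposed by either $rc(G)=2$ or $diam(G)=2$.
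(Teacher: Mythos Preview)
Your proposal is correct and follows essentially the same route as the paper. The only cosmetic difference is that in the forward direction the paper invokes the general inequality $scfc(G)\le rc(G)$ together with $2\le scfc(G)$ (from $diam(G)=2$) to conclude $scfc(G)=2$, whereas you verify directly that the rainbow $2$-coloring is itself a strong conflict-free coloring; these amount to the same observation, since a rainbow path of length at most $2$ is automatically a conflict-free shortest path.
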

\begin{proof}
For a connected graph $G$, if $rc(G)=2$ then obviously $diam(G)=2$. Since $diam(G)=2$ and obviously $scfc(G)\le rc(G)$, we have $2\le scfc(G)\le rc(G)=2$, i.e., $scfc(G)=2$. So, we get that
both $diam(G)=2$ and $scfc(G)=2$.

On the other hand, if $diam(G)=2$, then for each pair of vertices of $G$, the length of every shortest conflict-free path between the two vertices is at most 2,
and so every shortest conflict-free path must be a rainbow path. Since $scfc(G)=2$, then two colors are enough to make $G$ strongly conflict-free connected. So, $rc(G)\le 2$.
Since $diam(G)=2$, then $rc(G)= 2$.
\end{proof}
\begin{thm}\label{problem1}
For $k=2$, {\bf Problem 1} $($e$)$ is NP-complete.
\end{thm}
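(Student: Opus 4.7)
The plan is to derive NP-completeness by reducing from the NP-complete problem ``given $G$, is $rc(G)=2$?'' (Theorem \ref{cfmy}) to the decision problem ``is $scfc(G')\leq 2$?'', using Lemma \ref{new} as the bridge.

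First I would check membership in NP. A $2$-edge-coloring $c\colon E(G)\to\{1,2\}$ is a certificate of polynomial size, and by Theorem \ref{strong polynomial-time algorithm} Algorithm 2 verifies in polynomial time whether $c$ makes $G$ strongly conflict-free connected. Hence the problem lies in NP.

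For NP-hardness I would give the following polynomial reduction. Given an instance $G$, first compute $diam(G)$. If $diam(G)\neq 2$, output a fixed graph $H_0$ with $scfc(H_0)>2$; for concreteness take $H_0=P_5$, for which $scfc(P_5)\geq cfc(P_5)=\lceil\log_2 5\rceil=3$ by Lemma \ref{path} together with the trivial inequality $scfc\geq cfc$ (any strong conflict-free coloring is in particular a conflict-free coloring, since a shortest path is a path). If instead $diam(G)=2$, output $G$ itself. Both branches are clearly polynomial.

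Correctness follows from Lemma \ref{new}. If $rc(G)=2$, the lemma yields $diam(G)=2$ and $scfc(G)=2$, so the reduction outputs $G$ and the answer to ``$scfc\leq 2$?'' is yes. Conversely, if $rc(G)\neq 2$, then either $diam(G)\neq 2$, in which case the reduction outputs $H_0$ with $scfc(H_0)>2$, or $diam(G)=2$ but $scfc(G)\neq 2$. In the second subcase $G$ is not complete (its diameter is $2$), so there exists a pair $(u,v)$ with $d_G(u,v)=2$; no monochromatic path of length $2$ is conflict-free, which rules out $scfc(G)=1$ and therefore forces $scfc(G)\geq 3$. In either subcase the output graph has $scfc>2$. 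I do not expect a serious obstacle here: Lemma \ref{new} does the heavy lifting of tying $rc=2$ to $scfc=2$, and the only subtlety is excluding the degenerate case $scfc(G)=1$ (complete graphs), which the $diam=2$ branch automatically eliminates.
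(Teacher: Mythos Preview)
Your proof is correct and follows essentially the same approach as the paper: both reduce from the NP-complete problem ``$rc(G)=2$?'' via Lemma~\ref{new}, using that $diam(G)=2$ is decidable in polynomial time. Your version is simply more explicit---you spell out the many-one reduction (branching on $diam(G)$ and outputting a fixed no-instance $P_5$ when $diam(G)\neq 2$) and you explicitly verify NP membership via Theorem~\ref{strong polynomial-time algorithm}, whereas the paper compresses these steps into a two-line sketch.
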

\begin{proof}
It is NP-complete to decide whether the rainbow connection number of a connected graph is 2 by Theorem \ref{cfmy}. Therefore, deciding whether $scfc(G)=2$ and $diam(G)=2$ is NP-complete by Lemma \ref{new}.
Since it is easy to see that deciding if $diam(G)=2$ can be done in polynomial-time, then deciding if $scfc=2$ must be NP-complete.
\end{proof}

Now we are left to deal with the case $k\ge 3$, Recall the famous NP-complete problem below.

\begin{prob}[k-vertex coloring problem]
Given a graph $G=(V,E)$ and a fixed integer $k$, decide whether there is a k-vertex-coloring for $G$ such that each color class is an independent set.
\end{prob}

Next lemma is necessary for the proof of our theorem.

\begin{lem}\label{conflict-free path}
For $k\geq3$, {\bf Problem 6}$\preceq$ {\bf Problem 4}.
\end{lem}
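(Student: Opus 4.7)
The plan is to give a polynomial-time reduction from the $k$-vertex coloring problem (\textbf{Problem 6}) to \textbf{Problem 4} via a very simple star gadget. Given an instance $(H,k)$ of $k$-vertex coloring with $V(H)=\{v_1,\ldots,v_n\}$, I would construct $G=(V',E')$ by adjoining one fresh vertex $w$ and setting $V'=V(H)\cup\{w\}$, $E'=\{wv_i:1\le i\le n\}$, so that $G\cong K_{1,n}$. I would then let $P=\{(v_i,v_j):v_iv_j\in E(H)\}\subset V'\times V'$. Both $G$ and $P$ are constructed in polynomial time in $|V(H)|+|E(H)|$, so the reduction itself is cheap.

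The correctness hinges on a single observation: for any two distinct leaves $v_i,v_j$ of $G$ we have $d_G(v_i,v_j)=2$, and the \emph{unique} shortest $v_i$-$v_j$-path is $v_iwv_j$. Since a path of length $2$ is conflict-free iff its two edges receive distinct colors, a $k$-edge-coloring $\gamma:E'\to\{1,\ldots,k\}$ strongly conflict-free connects the pair $(v_i,v_j)\in P$ iff $\gamma(wv_i)\ne\gamma(wv_j)$. In the forward direction, from a proper $k$-vertex-coloring $\chi$ of $H$ I would set $\gamma(wv_i):=\chi(v_i)$; then for every $v_iv_j\in E(H)$ the (unique) shortest path $v_iwv_j$ uses the two distinct colors $\chi(v_i),\chi(v_j)$ and is therefore conflict-free. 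In the reverse direction, from any successful edge-coloring $\gamma$ of $G$ I would define $\chi(v_i):=\gamma(wv_i)$; the forced inequality $\gamma(wv_i)\ne\gamma(wv_j)$ for every $v_iv_j\in E(H)$ instantly gives $\chi(v_i)\ne\chi(v_j)$, so $\chi$ is a proper $k$-vertex coloring of $H$.

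The main obstacle here is essentially absent: the whole argument collapses to the length-$2$ conflict-free characterization, and the only thing to verify is that the star admits no alternative shortest $v_i$-$v_j$-paths that could bypass the forced inequality, which is immediate from the tree structure of $K_{1,n}$. Combined with Theorem \ref{polynomial-time for k-subset} (placing \textbf{Problem 4} in NP) and the NP-completeness of $k$-vertex coloring for every fixed $k\ge 3$, this lemma will supply the last ingredient needed for the $k\ge 3$ half of Theorem \ref{thm2}.
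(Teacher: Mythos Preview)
Your proposal is correct and is essentially identical to the paper's own proof: the paper also builds the star $K_{1,n}$ on $V(H)\cup\{x\}$ with edge set $\{xv:v\in V(H)\}$, takes $P=\{(u,v):uv\in E(H)\}$, and uses the same bijection $\chi(v)\leftrightarrow c(xv)$ together with the observation that the unique shortest $u$--$v$ path in the star is $u\,x\,v$. Only the notation differs.
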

\begin{proof} Now we polynomially construct a graph $G'=(V',E')$: for a given connected graph $G=(V,E)$, let $V'=V\cup \{x\}$, $E'=\{vx:v\in V\}$, and $P=\{(u,v):uv\in E\}$. It remains to prove that graph $G=(V,E)$ is vertex colorable with $k\geq 3$ colors if and only if graph $G'= (V',E')$ can be k-edge-colored such that there is a conflict-free path of length $d_{G'}(u,v)$ between every pair $(u, v)\in P$.

For one direction, assume that $G$ can be vertex-colored with k colors, we prove that there is an assignment of $k$ colors to the edges of the graph $G'$ that enables a conflict-free path of length $d_{G'}(u,v)$ between every pair $(u, v)\in P$ . We construct a bijection between $V$ and $E'$: $v\in V\rightarrow vx\in E'$. If $i$ is the color assigned to a vertex $v\in V$, then we assign the color $i$ to the edge $xv\in E'$. For any pair $(u,v)\in P$, since $uv\in E$, $xu$ and $xv$ have different colors. Thus, the unique path $u-x-v$ is a conflict-free shortest path between $u$ and $v$. The other direction can be also easily verified according to the bijection above.
\end{proof}

There is exactly one path between every pair of vertices in $P$ since the graph $G'$ constructed in the above proof is a tree. Thus, combining Theorem \ref{polynomial-time for k-subset} with Lemma \ref{conflict-free path}, we get the conclusion immediately:
\begin{thm}\label{k-subset}
For $k\geq 3$, {\bf Problem 4} is NP-complete even when $G$ is a star.
\end{thm}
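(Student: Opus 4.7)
The plan is to assemble the theorem from the two results just proved together with one structural observation about the reduction. First I would handle membership in NP: by Theorem \ref{polynomial-time for k-subset}, given any candidate $k$-edge-coloring as a certificate, one can verify in polynomial time whether all pairs in $P$ are strongly conflict-free connected. This places {\bf Problem 4} in NP for every fixed $k$.

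Next I would establish NP-hardness for $k\ge 3$ by chaining reductions. Lemma \ref{conflict-free path} supplies a polynomial-time many-one reduction from the $k$-vertex coloring problem ({\bf Problem 6}) to {\bf Problem 4}. Since $k$-vertex coloring is NP-complete for every fixed $k\ge 3$, NP-hardness of {\bf Problem 4} follows at once for all such $k$.

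The remaining point, and the only one that requires looking back at the construction, is the strengthened claim that hardness survives even when the input graph is a star. Here I would simply inspect the graph $G'$ produced in Lemma \ref{conflict-free path}: its vertex set is $V\cup\{x\}$ and its edge set is exactly $\{vx:v\in V\}$, so $G'$ is by definition a star with center $x$ and leaf set $V$. Consequently the reduction always outputs a star-shaped instance, and the NP-hardness conclusion automatically specializes to this graph class.

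I do not expect any genuine obstacle: the membership certificate, the hardness reduction, and the star observation are each immediate from results or constructions already in hand. The only care needed is to check that the pair-set $P$ in Lemma \ref{conflict-free path} is of polynomial size and that the number of colors is preserved under the reduction, both of which are transparent from that proof. Assembling these three observations yields Theorem \ref{k-subset}.
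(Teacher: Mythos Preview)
Your proposal is correct and mirrors the paper's own argument: the paper derives Theorem~\ref{k-subset} immediately by combining Theorem~\ref{polynomial-time for k-subset} (NP membership) with Lemma~\ref{conflict-free path} (hardness reduction from $k$-coloring), noting that the graph $G'$ produced there is a star. Your write-up is in fact slightly more explicit than the paper's one-line justification, but the content is identical.
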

The following lemma is a consequence of Theorem \ref{thm1} and Theorem \ref{k-subset}:
\begin{thm}
For $k\geq 2$, {\bf Problem 4} is NP-complete.
\end{thm}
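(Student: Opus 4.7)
The plan is simply to assemble the three ingredients already in hand. First I would observe that Problem 4 lies in NP for every fixed $k\geq 2$: a candidate $k$-edge-coloring serves as a polynomial-size certificate, and the algorithm underlying Theorem \ref{polynomial-time for k-subset} (Algorithm 2 restricted to the pairs in $P$ instead of to all of $V\times V$) verifies in polynomial time whether every pair of $P$ is strongly conflict-free connected under that coloring. This handles membership in NP uniformly in $k$.

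For NP-hardness I would split on the value of $k$. The case $k=2$ is exactly Theorem \ref{thm1}, which was established by chaining the two reductions 3-SAT $\preceq$ Problem 5 (Lemma \ref{lem3}) and Problem 5 $\preceq$ Problem 4 (Lemma \ref{lem2}). The case $k\geq 3$ is exactly Theorem \ref{k-subset}, which follows from the reduction in Lemma \ref{conflict-free path} of the classical $k$-vertex coloring problem to Problem 4, realized via the star construction $G'=(V\cup\{x\},\{vx:v\in V\})$ with $P=\{(u,v):uv\in E(G)\}$. Since every integer $k\geq 2$ lies in exactly one of the ranges $\{2\}$ or $\{k:k\geq 3\}$, combining the two results yields NP-hardness of Problem 4 for every $k\geq 2$.

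There is essentially no obstacle at this stage: the only thing to double-check is that the hypotheses of Theorems \ref{thm1} and \ref{k-subset} together cover all $k\geq 2$ with no gap, which they plainly do. Together with the NP-membership argument above, the theorem follows.
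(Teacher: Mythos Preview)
Your proposal is correct and matches the paper's approach exactly: the paper likewise derives this theorem directly as a consequence of Theorem \ref{thm1} (the case $k=2$) and Theorem \ref{k-subset} (the case $k\geq 3$), with NP-membership supplied by Theorem \ref{polynomial-time for k-subset}. There is nothing to add.
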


\noindent{\bf Proof of Theorem \ref{thm2}:}
For $k=2$, it holds by theorem \ref{problem1}. Then for $k\geq3$, considering Theorem \ref{strong polynomial-time algorithm} and Lemma \ref{conflict-free path}, to prove Theorem \ref{thm2}, we only need to reduce the instances obtained from the proof of Lemma \ref{conflict-free path} to some instances of {\bf Problem 1}(e).
Let $G=(V,E)$ be a star graph with $\hat{V}=\{v_1,v_2,\cdots,v_n\}$ being the leaf vertex set and $a$ being the non-leaf vertex. The vertices of any pair $(v_i,v_j)\in P$ are both the leaf vertices in graph $G$. And we construct a graph $G'$ according to graph $G$ as follows: for every vertex $v_i\in \hat{V}$ we introduce two new vertices $x_{v_i}$ and $x'_{v_i}$, and for every pair of leaf vertices $(u,v)\in (\hat{V}\times \hat{V})\setminus P$ we introduce two new vertices $x_{(u,v)},x'_{(u,v)}$. Then we have:
\begin{flushleft}
    $V'=V\cup V_1\cup V_2$ where\\
    $V_1=\{x_{v_i}:i\in\{1,\cdots,n\}\}\cup\{x_{(v_i,v_j)}:(v_i,v_j)\in (\hat{V}\times \hat{V})\setminus P\}$\\
    $V_2=\{x'_{v_i}:i\in\{1,\cdots,n\}\cup\{x'_{(v_i,v_j)}:(v_i,v_j)\in (\hat{V}\times \hat{V})\setminus P\}$\\
    $E'=E\cup E_1\cup E_2\cup E_3\cup E_4$ where\\
    $E_1=\{v_ix_{v_i}:v_i\in \hat{V},x_{v_i}\in V_1\}$\\
    $E_2=\{v_ix_{(v_i,v_j)},v_jx_{(v_i,v_j)}:(v_i,v_j)\in (\hat{V}\times \hat{V})\setminus P\}$\\
    $E_3=\{xx':x\in V_1,x'\in V_2\}$\\
    $E_4=\{ax':x'\in V_2\}$
\end{flushleft}

Then we need to prove that $G'$ is k-strong conflict-free connected if and only if $G$ is $k$-subset strongly conflict-free connected.

Firstly, there is a two-length path $v_i-x-v_j$ in $G$ for all pairs $(v_i,v_j)\in P$, and this path also occurred in $G'$ which is the unique path of length two in $G'$ between $v_i$ and $v_j$. It implies that if the graph $G'$ is strongly conflict-free colored with $k$ colors, then $G$ has an edge-coloring with $k$ colors such that every pair in $P$ is strongly conflict-free connected.

Secondly, assume that there is a k-edge-coloring $c$ of $G$ such that all pairs in $P$ are strongly conflict-free connected. Then we extend this edge-coloring $c$ of $G$ to an edge-coloring $c'$ of $G'$: $E$ retain coloring $c$; assign color 3 to $uv\in E_1$; assign $v_ix_{(v_i,v_j)}$, $v_jx_{(v_i,v_j)}\in E_2$ the color 1 and 2 respectively. Since subgraph $H=(V_1\cup V_2,E_3)$ is a complete bipartite graph, we choose a perfect matching $M$ of size $|V_1|$, giving the edges in $M$ color 1 and the edges in $E_3\setminus M$ color 2. We then assign the edges $ax'\in E_4$ the color 3. It is easy to verify that this coloring makes $G'$ strongly conflict-free connected. Since the graph $G'$ is bipartite, the $k$-strong conflict-free connectivity problem is NP-complete even for the bipartite case.                         $\Box$

\end{document}